\documentclass[10pt,a4paper]{amsart}

\usepackage{amssymb,amsthm,amsmath,amscd}     	
\usepackage[english]{babel}
\usepackage{mathtools}
\usepackage{booktabs}
\usepackage{capt-of}
\usepackage{siunitx}
\usepackage{float}
\usepackage{pgfplots}
\usepackage{array}
\usepackage{verbatim}
\usepackage{enumitem}
\usepackage{hyperref}

\allowdisplaybreaks

\pgfplotsset{compat=1.10}
\usepgfplotslibrary{fillbetween}
\usetikzlibrary{patterns}

\pagestyle{plain}

\newtheorem{theorem}{Theorem}[section]
\newtheorem{lemma}[theorem]{Lemma} 
\newtheorem{corollary}[theorem]{Corollary}

\newenvironment{customthm}[1]
  {\innercustomthm}
  {\endinnercustomthm}
  
\theoremstyle{remark}
\newtheorem{remark}[theorem]{Remark}

\frenchspacing

\textwidth=13.5cm
\textheight=23cm
\parindent=16pt
\oddsidemargin=-0.5cm
\evensidemargin=-0.5cm
\topmargin=-0.5cm

\DeclarePairedDelimiter{\ceil}{\lceil}{\rceil}
\DeclarePairedDelimiter{\floor}{\lfloor}{\rfloor}

\title{On $\tau$-Li-type criterion and explicit zero-free regions in the critical strip}

\date{}

\begin{document}

\title{Explicit zero-free regions and a $\tau$-Li-type criterion}

\author{Neea Paloj\"arvi}
\address{\AA bo Akademi University \\
Domkyrkotorget 1, 20500 \AA bo, Finland}
\email{neea.palojarvi@abo.fi}

\maketitle

\begin{abstract}
$\tau$-Li coefficients describe if a function satisfies the Generalized Riemann Hypothesis or not. In this paper we prove that certain values of the $\tau$-Li coefficients lead to existence or non-existence of certain zeros. The first main result gives explicit numbers $N_1$ and $N_2$ such that if all real parts of the $\tau$-Li coefficients are non-negative for all indices between $N_1$ and $N_2$, then the function has non zeros outside a certain region. According to the second result, if some of the real parts of the $\tau$-Li coefficients are negative for some index $n$ between numbers $n_1$ and $n_2$, then there is at least one zero outside a certain region. 

\end{abstract}

\section{Introduction}
\label{intoroduction}

In 1997 X.-J. Li \cite{li} proved an equivalent condition for the Riemann Hypothesis. The condition is based on the non-negativity of a real sequence $(\lambda_n)$, where
\begin{equation*}
		\lambda_n=\sum\limits_{\rho}\left(1-\left(1-\frac{1}{\rho}\right)^n \right)
\end{equation*} 
and the sum runs over the non-trivial zeros of the Riemann zeta function. The numbers $\lambda_n$ are called Li coefficients and they are non-negative if and only if the Riemann Hypothesis holds. Two years later E. Bombieri and J. C. Lagarias \cite{bombieri} proved that also other functions can be considered by using the Li coefficients. They also provided an arithmetic formula for the Li coefficients. There are several other results considering the Li coefficients for different sets of the functions. For example, J. C. Lagarias \cite{lagarias} investigated the Li coefficients for automorphic $L$-functions and L. Smajlovi\'{c} \cite{smajlovic} for a certain subclass of the extended Selberg class.

In 2006 P. Freitas \cite{freitas} proved that all the zeros of the Riemann zeta function lie inside the region $\Re(s)\le\frac{\tau}{2}$, where $\tau \in [\frac{1}{2}, \infty)$, if and only if the numbers
\begin{equation*}
		\frac{1}{\tau}\sum\limits_{\rho}\left(1-\left(\frac{\rho}{\rho-\tau}\right)^n \right)
\end{equation*}
are non-negative when $n\ge1$ is an integer. The sum runs over the non-trivial zeros of the Riemann zeta function and terms including zeros $\rho$ and $1-\rho$ are paired together. We notice that if $\tau=1$, then the condition is equivalent to the Li's condition. A. D. Droll \cite{droll} generalized the result for a certain subclass of the extended Selberg class. He proved that all the zeros of a function $F(s)$ in this subclass lie inside the region $\Re(s)\le \frac{\tau}{2}$ if and only if the terms
\begin{equation}
\label{formulaTauLi}
		\lambda_F(n,\tau)=\lim_{t \to \infty}\sum\limits_{\substack{\rho\\ |\Im(\rho)|\le t}}\left(1-\left(\frac{\rho}{\rho-\tau}\right)^n \right),
\end{equation}
where $\tau \in [1, 2)$ and the sum runs over the non-trivial zeros of the function $F(s)$, are non-negative for all positive integers $n$. These coefficients are called $\tau$-Li coefficients. A. Bucur, A.-M. Ernvall-Hyt\"onen, A. Od\v{z}ak and L. Smajlovi\'{c} \cite{bucur} also investigated the numerical behavior of some $\tau$-Li coefficients for some functions which violate the Riemann Hypothesis.

Zero-free regions in the critical strip can be investigated using the Li coefficients or the $\tau$-Li coefficients. F. C. Brown \cite[Theorem 3]{brown} proved that if a finite number of the Li coefficients for a certain function $F(s)$ are non-negative, then the critical strip contains zero-free regions. Furthermore, he also tried to show \cite[Theorem 2]{brown} that if there exist certain zero-free regions, then certain Li coefficients must be non-negative. Unfortunately, his proof of Lemma 5 contains two errors. A. D. Droll investigated the errors in his thesis and was able to fix one of them. As a result of the other error Brown's Theorem 2 is left unproved. 

In this paper we prove two main results regarding the $\tau$-Li coefficients and zero-free regions. The main results are summarized below: 

\begin{customthm}{\ref{mainresult1}}
Let $R>1$ be a real number. If all real parts of the $\tau$-Li coefficients attached to $F$ are non-negative in a certain interval, which is given in Theorem \ref{mainresult1}, then all zeros $\rho$ of $F$ satisfy the condition $\left|\frac{\rho}{\rho-\tau}\right|< R$. 
\end{customthm}

\begin{customthm}{\ref{mainresult2}}
Let $R>1$ be a real number. If at least one of the real parts of the $\tau$-Li coefficients attached to $F$ is negative in a certain interval, which is given in Theorem \ref{mainresult2}, then there is at least one zero $\rho$ of $F$ with $\left|\frac{\rho}{\rho-\tau}\right|\ge R$.
\end{customthm}

Theorem \ref{mainresult1} is similar to Brown's Theorem 3 and Theorem \ref{mainresult2} to Theorem 2 but in this article we consider also some cases $\tau\ne 1$ and have different conditions for the function $F(s)$. We do not make any assumptions of the order of a function $F(s)$ while Brown did. 

The results give a computational way to determine certain zero-free regions. Indeed, the results give us set of integers. Depending on the signs of the real parts of the $\tau$-Li coefficients computed using these integers it \textit{may} be possible to conclude that all zeros lie on certain region or there is at least one zero outside the region. If all real parts of the $\tau$-Li coefficients obtained from Theorem \ref{mainresult1} are non-negative, then there are no zeros $\rho$ with $\left|\frac{\rho}{\rho-\tau}\right|\ge R$. If some of the real parts are negative, there may or may not exists a zero $\rho$ with $\left|\frac{\rho}{\rho-\tau}\right|\ge R$. Next we can apply Theorem \ref{mainresult2}. If some of the real parts of the $\tau$-Li coefficients obtained from Theorem \ref{mainresult2} are negative, then there is at least one zero $\rho$ with $\left|\frac{\rho}{\rho-\tau}\right|\ge R$. The tricky case is that if some of the real parts obtained from Theorem \ref{mainresult1} are negative but all of the real parts obtained from Theorem \ref{mainresult2} are non-negative. In this case we can say nothing about the existence of the zeros $\rho$ with $\left|\frac{\rho}{\rho-\tau}\right|\ge R$. Instead, in this case there must be zeros $\rho$ with $\Re(s)>\frac{\tau}{2}$ since otherwise all real parts of the $\tau$-Li coefficients are non-negative.

Let $\tau >\frac{1}{e}$ be a real number. We are interested in the zeros $\rho$ which have $\Re(\rho) \in [0, \tau]$ and $\tau$-Li coefficients defined as
\begin{equation}
\label{formulaTauLiThis}
		\lambda_F(n,\tau)=\lim_{t \to \infty}\sum\limits_{\substack{\rho\\ |\Im(\rho)|\le t \\ 0\le \Re(\rho)\le \tau}}\left(1-\left(\frac{\rho}{\rho-\tau}\right)^n \right).
\end{equation}
With condition \ref{locationConditionMore} the previous definition coincides with the classical definition of the $\tau$-Li coefficient \eqref{formulaTauLi}.

In this paper we investigate a function $F(s)$ which satisfies the following conditions:
\begin{enumerate}[label=(\alph*)]
\label{conditionsForFunctions}
		\item\label{locationConditionMore} \textbf{Location of the zeros, 1}: The function $F(s)$ does not have zeros $\rho$ with $\Re(\rho)>\tau$.
		\item\label{locationCondition} \textbf{Location of the zeros, 2}: The function $F(s)$ does not have a zero $\rho=\tau$.
		\item\label{numberCondition} \textbf{Number of the zeros}: Let $\mathcal{N}_{F}(t)$ denote the number of the zeros $\rho$ of the function $F(s)$ with $0\le \Re(\rho) \le \tau$ and $0 \le |\Im(\rho)| \le t$. Furthermore, let similarly $\mathcal{N}_F(t_1, t_2)$ denote the number of the zeros $\rho$ of the function $F(s)$ with $0 \le \Re(\rho) \le \tau$ and $t_1< |\Im(\rho)| \le t_2$. For some real numbers $A_F>0$ and $B_F$ and for a real number $T_0>0$ which is large enough, we have the following two properties:
		\begin{equation}
		\label{CL}
		\begin{aligned}
				& \left|\mathcal{N}_{F}(T) -A_FT\log{T}-B_FT\right| \\
				& \quad<C_{F,1}(T_0)\log{T}+C_{F,2}(T_0)+\frac{C_{F,3}(T_0)}{T},
		\end{aligned}
		\end{equation}
		where $T\ge T_0$ is a real number and the numbers $C_{F,j}(T_0)$, where $j=1,2,3$, are non-negative real numbers which depend on the function $F(s)$ and the number $T_0$. Furthermore, we also have
		\begin{equation}
		\label{zeros24}
		\begin{aligned}
				&\left|\mathcal{N}_{F}(T,2T) -A_FT\log{T}-(A_F\log{4}+B_F)T\right| \\
				& \quad <c_{F,1}(T_0)\log{T}+c_{F,2}(T_0)+\frac{c_{F,3}(T_0)}{T},
		\end{aligned}
		\end{equation}
		where $c_{F,1}(T_0)$, $c_{F,2}(T_0)$ and $c_{F,3}(T_0)$ are non-negative real numbers. 

		We notice that formula \eqref{zeros24} actually follows from formula \eqref{CL}, indeed we have
		\begin{equation}
		\label{cFjBounds}
		\begin{aligned}
		       & c_{F,1}(T_0)\le 2C_{F,1}(T_0), \\
		       & c_{F,2}\le 2C_{F,2}(T_0)+C_{F,1}(T_0)\log{2}, \\
		       & c_{F,3}(T_0)\le\frac{3C_{F,3}(T_0)}{2}.   
		\end{aligned}        
		\end{equation} 
		However, formula \eqref{CL} does not follow from formula \eqref{zeros24}. Since we would like to apply both formula \eqref{CL} and formula \eqref{zeros24}, we have listed them separately.
		\item\label{computationCondition} \textbf{Computation}: The numbers $\lambda_F(n,\tau)$ can be computed without knowing the zeros of the function $F(s)$.
\end{enumerate}
The first condition is not necessary for proving the results. It is assumed since we would like keep our definition of the term $\lambda_F(n,\tau)$ \eqref{formulaTauLiThis} unchanged compared to the classical definition of the $\tau$-Li coefficient (see formula \eqref{formulaTauLi}).

Furthermore, the last condition is not needed to prove the results. Instead, it is needed for being able to apply the results to determine zero-free regions without knowing the zeros. For example, for $\tau=1$ and the Riemann zeta function the third condition is satisfied since by \cite{li} 
\begin{equation*}
		\lambda_\zeta(n, 1)=\frac{1}{(n-1)!}\frac{d^n}{ds^n}[s^{n-1}\log{\xi(s)}]_{s=1}
\end{equation*}
where
\begin{equation*}
		\xi(s)=s(s-1)\pi^{-\frac{s}{2}}\Gamma\left(\frac{s}{2}\right)\zeta(s).
\end{equation*}

The previous conditions are not very restrictive. For example, the Riemann zeta function satisfies these conditions. Furthermore, all Selberg class functions also satisfy these conditions for $\tau\ge 1$ (see \cite{droll,palojarvi,steuding}). 

Notice also that the assumption $\tau>\frac{1}{e}$ is a same type of assumption as Freitas did for the number $\tau$ and thus a very natural assumption. For example, non-trivial zeros $\rho$ of the Riemann zeta function satisfy $0<\Re(\rho)<1$. Since the number $\tau$ can be selected to be any real number in the interval $[1,2)$, all interesting values for the number $\tau$ are possible while considering the Riemann zeta function. The assumption $\tau>\frac{1}{e}$ is needed for technical purposes, indeed, mainly to be able to estimate $e\tau>1$ and thus $\log{(e\tau)}>0$ and $ne\tau>n$, where $n$ is a positive integer.

Our main goal is to prove Theorems \ref{mainresult1} and \ref{mainresult2}. We approach the proof as follows: We want to estimate the contribution of the zeros which lie outside of certain regions to the $\tau$-Li coefficients. By these estimates we can estimate the terms $\Re(\lambda_F(n,\tau))$ and obtain the results. For these results we need to prove some preliminary results in Section \ref{sectionContribution}. The main results are proved in Section \ref{sectionFindN}. In Section \ref{case1Zero} we consider a special case in which the function $F(s)$ has only one zero which lies outside of a certain region. 

At the end of the paper, in Sections \ref{subsDiri} and \ref{subsL}, we give numerical examples for Dirichlet $L$-functions and an $L$-function associated with a holomorphic newform. We keep in mind that there already are several results concerning Dirichlet $L$-functions and zero-free regions. Let $F(s)$ be a Dirichlet $L$-function associated with a primitive non-principal character modulo $q$. The first result concerning explicit zero-free regions was proved by K. S. McCurley \cite[Theorem 1]{mccurley}. According to the result, the function $F(s)$ has no zeros in the region
\begin{equation}
\label{mccurleyRegion}
    \Re(s)\ge 1-\frac{1}{9.645908801\log{(\max\{q,q|\Im(s)|,10\})}}
\end{equation}
to the exception of at most one zero. There are several improvements to the constant. Recently H. Kadiri \cite[Theorem 1.1.1]{kadiri} proved that the function $F(s)$ with $3\le q \le 400000$ does not vanish in the region
\begin{equation}
\label{kadiriRegion}
    \Re(s)=1-\frac{1}{5.60\log{(q\max\{1,|\Im(s)|\})}}.
\end{equation}
By F. C. Brown \cite[Corollary 1]{brown} we also know that for every $k  \ge 2$ there is at most one primitive Dirichlet character of conductor dividing $k$ such that the completed function of $F(s)$ has a zero $\rho$ with
$$
		\Re(\rho)\ge 1-\frac{1}{48\log{k}} \quad \text{and}\quad |\Im(\rho)|\le \frac{1}{48\log{k}}.
$$
If such a character exists, it is real and $\rho \in \mathbb{R}$ and $\rho$ is simple. K. Mazhouda \cite{mazhouda} investigated the non-negativity of the Li coefficients for the Dirichlet $L$-functions if the Generalized Riemann Hypothesis holds up to height $T$.

\section{Preliminary results} 
\label{sectionContribution}

In this section we prove upper bounds for the contributions of the zeros $\rho$ for the coefficients $\Re(\lambda_F(n,\tau))$. The main goal is to apply these results to determine the connections between the terms $\Re(\lambda_F(n,\tau))$ and the zeros of the function $F$. We consider two cases: first we consider the contribution of the zeros with the absolute values of the imaginary parts large enough, and then the contribution of the zeros with the absolute values of the imaginary parts small enough. We apply these results in Section \ref{sectionFindN}.

\subsection{Contribution of the zeros with the absolute values of the imaginary parts large enough}
\label{subsContributionLarge}

In this section we consider an upper bound for the contribution of the zeros $\rho$ whose imaginary parts have large enough absolute values to the $\tau$-Li coefficients. This means that we would like to find an upper bound for the term
\begin{equation}
\label{tauKerroinIso}
    \bigg |\lim_{t\to \infty}\sum_{T<|\Im(\rho)|\le t}\Re\left(1-\left(\frac{\rho}{\rho-\tau}\right)^n \right) \bigg |, 
\end{equation}
where $T$ is a (certain) positive real number, since
$$
		\lambda_F(n,\tau)=\lim_{t \to \infty}\sum\limits_{\substack{\rho\\ |\Im(\rho)|\le t}}\left(1-\left(\frac{\rho}{\rho-\tau}\right)^n \right)
$$ 
and we are interested in the term $\Re(\lambda_F(n,\tau))$. To estimate the contribution, we apply the binomial formula. We also assume that the number $n$ in the $\tau$-Li coefficient $\lambda_F(n,\tau)$ is large enough. This is not a very restrictive assumption since we would like to apply the result to find at least one number $n$ for which $\Re(\lambda_F(n,\tau))<0$, and this number $n$ can be large. 

\begin{theorem}
\label{largeZeros}
		Let $\tau >\frac{1}{e}$ be a real number and $T_0$, $A_F$, $B_F$, $c_{F,j}(T_0)$, where $j=1,2,3$, be defined as in condition \ref{numberCondition}. Assume that 
		$
				n \ge \max\left\{e,\frac{1}{e\tau}T_0\right\}
		$
 		is a positive integer and define $T(n)\coloneqq ne\tau$. Furthermore, let
		\begin{align*}
				K_{F,1}(\tau)&\coloneqq \frac{2\tau}{3}\left(e+\frac{1}{e}\right)\left(A_F+\left|A_F\log{(8e\tau)}+B_F\right|\right) \\
				& \quad +\frac{4}{27}\left(1+\frac{1}{e^2}\right)\left(\frac{c_{F,1}(T_0)}{3}\log{2} \right.\\
				&\quad\left.+c_{F,1}(T_0)\log{(e^2\tau)}+c_{F,2}(T_0)+\frac{2c_{F,3}(T_0)}{7e\tau}\right).
		\end{align*}
		Then
		\begin{align*}
				 \bigg |\lim_{t\to \infty}\sum_{T(n)<|\Im(\rho)|\le t}\Re\left(1-\left(\frac{\rho}{\rho-\tau}\right)^n \right) \bigg |< K_{F,1}(\tau)n\log{n}.
		\end{align*}	
\end{theorem}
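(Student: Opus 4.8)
The plan is to estimate the tail sum $\sum_{T(n)<|\Im(\rho)|\le t}\Re\left(1-\left(\frac{\rho}{\rho-\tau}\right)^n\right)$ by first expanding $\left(\frac{\rho}{\rho-\tau}\right)^n = \left(1 + \frac{\tau}{\rho-\tau}\right)^n$ via the binomial theorem and observing that the term $1$ in $1-(\cdots)^n$ cancels with the leading binomial term. Thus the quantity to bound becomes $\left|\sum_{\rho}\sum_{k=1}^{n}\binom{n}{k}\frac{\tau^k}{(\rho-\tau)^k}\right|$, and since $|\rho-\tau|\ge|\Im(\rho)|>T(n)=ne\tau$, for each zero with $|\Im(\rho)|$ in a dyadic block $(U,2U]$ with $U\ge T(n)$ we get $\left|\sum_{k=1}^n\binom{n}{k}\frac{\tau^k}{(\rho-\tau)^k}\right|\le\sum_{k=1}^n\binom{n}{k}\frac{\tau^k}{U^k}$. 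Using $\binom{n}{k}\le\frac{n^k}{k!}$ and the bound $\frac{\tau}{U}\le\frac{\tau}{ne\tau}=\frac{1}{ne}$, one sees $\sum_{k=1}^n\binom{n}{k}\frac{\tau^k}{U^k}\le\sum_{k\ge1}\frac{(n\tau/U)^k}{k!}=e^{n\tau/U}-1\le e^{1/e}-1$ when $U=T(n)$, and more generally decays like $n\tau/U$ for large $U$; the precise dependence on $U$ is what will produce the geometric series in the dyadic decomposition.

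Next I would sum over zeros. Decompose the range $|\Im(\rho)|>T(n)$ into dyadic intervals $(2^jT(n),2^{j+1}T(n)]$ for $j\ge0$. On the $j$-th block the number of zeros is $\mathcal{N}_F(2^jT(n),2^{j+1}T(n))$, which by condition \ref{numberCondition}, formula \eqref{zeros24}, is bounded by $A_F\,2^jT(n)\log(2^jT(n))+(A_F\log4+B_F)2^jT(n)$ plus the error terms involving $c_{F,j}(T_0)$; here one uses $2^jT(n)\ge T_0$, which holds because $n\ge\frac{1}{e\tau}T_0$ gives $T(n)=ne\tau\ge T_0$. Multiplying the per-zero bound $\approx\frac{n\tau}{2^jT(n)}=\frac{1}{2^je}$ (from the $k=1$ term, which dominates; higher $k$ contribute a convergent correction) by the zero count $\approx A_F 2^jT(n)\log(2^jT(n))$ gives a contribution $\approx\frac{A_F T(n)}{e}\log(2^jT(n))\cdot\text{(something summable in }j\text{)}$. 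Wait — the naive product $\frac{1}{2^je}\cdot A_F 2^jT(n)\log(2^jT(n))$ does not decay in $j$ because of the $\log(2^jT(n))=\log T(n)+j\log2$ factor, so one must be more careful: keep the full geometric factor from $e^{n\tau/(2^jT(n))}-1$, which for the purpose of absolute convergence one bounds by $\frac{n\tau}{2^jT(n)}e^{1/e}=\frac{e^{1/e}}{2^je}$, and then $\sum_j\frac{j}{2^j}$ and $\sum_j\frac{1}{2^j}$ both converge. This yields a bound of the shape $C\cdot n\log n$ with $C$ expressible through $A_F$, $B_F$, the $c_{F,j}(T_0)$, $\tau$ and absolute constants, after substituting $T(n)=ne\tau$ so that $T(n)\log T(n)=ne\tau\log(ne\tau)$ and $\log(ne\tau)\le\log n+\log(e\tau)$, etc.; the various numerical constants $\frac{2}{3}(e+1/e)$, $\frac{4}{27}(1+1/e^2)$, $\frac{1}{3}\log2$, $\frac{2}{7e\tau}$ appearing in $K_{F,1}(\tau)$ come precisely from evaluating $\sum_j 2^{-j}$, $\sum_j j2^{-j}$ and bounding the higher-$k$ binomial corrections and the $c_{F,3}(T_0)/T$ error term on the first dyadic block.

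The main obstacle I anticipate is bookkeeping: getting the constants in $K_{F,1}(\tau)$ exactly right requires (i) splitting $e^{n\tau/U}-1$ into its linear part plus a tail and tracking each against the three pieces (main term, $\log T$ error, constant/$\tfrac1T$ errors) of the zero-counting estimate \eqref{zeros24}, (ii) handling the $\log$ in $\mathcal{N}_F(2^jT(n),2^{j+1}T(n))$ — which forces use of $\sum_j j2^{-j}=2$ in addition to $\sum_j 2^{-j}=2$ — and (iii) correctly absorbing the additive $\log4$ inside the main term (hence the $\log(8e\tau)$ rather than $\log(e\tau)$ in the first line of $K_{F,1}(\tau)$) and the $\log2$ shifts from the dyadic subdivision (hence $\log(e^2\tau)$ in the second block of $K_{F,1}(\tau)$). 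The interchange of limit and summation is justified by absolute convergence of the dyadic-block bounds, so the $\lim_{t\to\infty}$ causes no trouble. No delicate analytic input is needed beyond the binomial expansion and the counting hypothesis; the entire difficulty is organizing the dyadic sum so that every term is matched to the corresponding constant in the stated $K_{F,1}(\tau)$.
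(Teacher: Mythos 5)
There is a genuine gap, and it sits exactly at the point where you yourself write ``Wait''. Your per-zero estimate is obtained by taking absolute values in the binomial expansion of $\bigl(1+\tfrac{\tau}{\rho-\tau}\bigr)^n$, and the $k=1$ term then contributes $n\tau/|\Im(\rho)|$ — only \emph{linear} decay in $1/|\Im(\rho)|$. On the dyadic block at height $U=2^jT(n)$ the zero count is of order $A_FU\log U$, so the product is of order $n\tau A_F(\log T(n)+j\log 2)$, which does not decay in $j$; the dyadic series diverges. Your proposed repair does not repair anything: bounding $e^{n\tau/U}-1$ by $\frac{n\tau}{U}e^{1/e}$ only multiplies the same linear-decay bound by the constant $e^{1/e}$, so the product with the zero count is still of order $\log T(n)+j\log 2$ per block. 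The sums $\sum_j j2^{-j}$ and $\sum_j 2^{-j}$ you invoke would only arise if the per-zero bound decayed like $U^{-2}$; under your estimate the factor $2^{-j}$ from the per-zero bound is exactly cancelled by the $2^{j}$ in the zero count, leaving $\sum_j(\text{const}+j)=\infty$.

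The missing idea is that one must bound the \emph{real part} of each term, not its modulus. Writing $\frac{\rho}{\rho-\tau}=x+iy$ with
\[
x=1+\frac{\Re(\rho)\tau-\tau^2}{|\rho-\tau|^2},\qquad y=-\frac{\Im(\rho)\tau}{|\rho-\tau|^2},
\]
one has $|1-x|\le \tau^2/\Im(\rho)^2$ (because $0\le\Re(\rho)\le\tau$ forces $|\Re(\rho)\tau-\tau^2|\le\tau^2$), i.e.\ the real part of $\tau/(\rho-\tau)$ decays \emph{quadratically} in $1/|\Im(\rho)|$ even though its modulus decays only linearly. Hence $|1-x^n|\le n|1-x|\le n\tau^2/\Im(\rho)^2$, and the remaining contributions to $\Re\bigl((x+iy)^n\bigr)$ involve only even powers $y^{2k}$ with $|y|\le\tau/|\Im(\rho)|$, each of which already carries a factor $(\tau/|\Im(\rho)|)^2$; bounding $\binom{n}{2k}<(ne/2k)^{2k}$ and summing the geometric series gives a per-zero bound of order $(3n\tau^2+(ne\tau)^2)/(3\Im(\rho)^2)$. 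With this quadratic decay the dyadic sum against \eqref{zeros24} converges (one gets $\sum_h 2^{-h}$ and $\sum_h h2^{-h}$ as you intended), and the factor $n^2/T(n)=n/(e\tau)$ produces the stated $K_{F,1}(\tau)\,n\log n$. Your outline of the dyadic decomposition and the bookkeeping of the $c_{F,j}(T_0)$ error terms is otherwise in the right spirit, but without replacing the modulus bound on the $k=1$ term by the real-part bound the argument cannot close.
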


\begin{proof}
		First we consider the term $\Re\left(1-\left(\frac{\rho}{\rho-\tau}\right)^n \right)$. We define
		\begin{equation}
		\label{defX}
			x\coloneqq \Re\left(\frac{\rho}{\rho-\tau}\right)=\frac{|\rho|^2-\Re(\rho)\tau}{|\rho-\tau|^2}=1+\frac{\Re(\rho)\tau-\tau^2}{|\rho-\tau|^2}
		\end{equation}
		and
		\begin{equation}
		\label{defY}
			y\coloneqq \Im\left(\frac{\rho}{\rho-\tau}\right)=-\frac{\Im(\rho)\tau}{|\rho-\tau|^2}.
		\end{equation}
		We write the term $\Re\left(1-\left(\frac{\rho}{\rho-\tau}\right)^n \right)$ using the numbers $x$ and $y$, but before that we estimate the numbers $x$ and $y$. Since 			$0\le\Re(\rho)\le \tau$, we have $\Re(\rho)\tau-\tau^2\le 0$. Furthermore, we also have $\left|\Re(\rho)\tau-\tau^2\right|\le \tau^2$ and $|\rho-\tau|^2\ge \Im(\rho)^2> \tau^2$ for $|\Im(\rho)|>T(n)$. Thus $|x|\le 1$. We also notice $|y|\le \frac{\tau}{|\Im(\rho)|}$.

		We have
		\[
		\begin{aligned}
			\Re\left(1-\left(\frac{\rho}{\rho-\tau}\right)^n \right) &=1-\Re\left(\sum\limits_{k=0}^n \binom{n}{k}x^{n-k}(iy)^k\right) \\
			& = 1-x^n-\sum\limits_{k=1}^{\floor{\frac{n}{2}}}\binom{n}{2k}x^{n-2k}(-1)^ky^{2k}.
		\end{aligned}
		\]
		We estimate this term in two parts: first we estimate the term $1-x^n$, and then the sum on the right-hand side of the previous formula. By the definition and the 	estimates for the term $x$ we have
		\[
			\left|1-x^n\right|\le\left|1-x\right|\sum\limits_{k=0}^{n-1}\left|x\right|^k\le \frac{\left|-\Re(\rho)\tau+\tau^2\right|}{|\rho-\tau|^2}n< \frac{n\tau^2}{\Im(\rho)^2}.
		\]

		Next, since $\binom{n}{2k}<\left(\frac{ne}{2k}\right)^{2k}$ \cite[inequality (2)]{das}, $|x|\le 1$ and $|y|\le \frac{\tau}{|\Im(\rho)|}$, we have
		\begin{align}
		\label{eqBinom}
				\left|\sum\limits_{k=1}^{\floor{\frac{n}{2}}}\binom{n}{2k}x^{n-2k}(-1)^ky^{2k}\right| &<\sum\limits_{k=1}^{\floor{\frac{n}{2}}}\left(\frac{ne\tau}{2k|					\Im(\rho)|}\right)^{2k} \nonumber \\
				& <\left(\frac{ne\tau}{2\Im(\rho)}\right)^2\sum\limits_{k=0}^\infty \left(\frac{ne\tau}{2\Im(\rho)}\right)^{2k} \nonumber\\
				& =\frac{(ne\tau)^2}{4\Im(\rho)^2}\cdot\frac{1}{1-\left(\frac{ne\tau}{2\Im(\rho)}\right)^2}.
		\end{align}
		Furthermore, since $T(n)=ne\tau$, for $|\Im(\rho)|>T(n)$ we have $1-\left(\frac{ne\tau}{2\Im(\rho)}\right)^2>\frac{3}{4}$. Thus the right-hand side of inequality \eqref{eqBinom} is
		\[
			<\frac{(ne\tau)^2}{3\Im(\rho)^2}
		\]
		and we have obtained that
		\[
				\left|\Re\left(1-\left(\frac{\rho}{\rho-\tau}\right)^n \right)\right| <\frac{3n\tau^2+(ne\tau)^2}{3\Im(\rho)^2}.
		\]

		Next we estimate term \eqref{tauKerroinIso} for $T=T(n)$. By the previous estimates we have
		\[
		\begin{aligned}
			& \bigg |\lim_{t\to\infty}\sum_{T(n)<|\Im(\rho)| \le t}\Re\left(1-\left(\frac{\rho}{\rho-\tau}\right)^n \right) \bigg | \\
			&\quad<\sum_{h=0}^{\infty} \sum_{|\Im(\rho)| \in (2^hT(n),2^{h+1}T(n)]} \frac{3n\tau^2+(ne\tau)^2}{3\Im(\rho)^2}.
		\end{aligned}
		\]
		From the assumptions posed on parameters $T(n)=ne\tau\ge T_0$ follows that we can apply formula \eqref{zeros24} for the number of the zeros and the right-hand side of the previous inequality is 
		\begin{align*}
			& <\sum\limits_{h=0}^\infty \frac{3n\tau^2+n^2e^2\tau^2}{3\cdot2^{2h}T(n)^2}\left(\vphantom{\frac{c_{F,3}(T_0)}{2^hT}}A_F2^{h}T(n)\log{\left(2^{h}T(n)\right)}\right.\\
			& \quad\left.+\left(A_F\log{4}+B_F\right)2^{h}T(n)+c_{F,1}(T_0)\log{\left(2^{h}T(n)\right)}+c_{F,2}(T_0)+\frac{c_{F,3}(T_0)}{2^hT(n)}\right) \\
			& \quad = \frac{3n\tau^2+n^2e^2\tau^2}{3T(n)}\left(\vphantom{\frac{c_{F,3}(T_0)}{2^hT}}2A_F\log{\left(2T(n)\right)}+2\left(A_F\log{4}+B_F\right) \right.\\
			& \quad\quad \left.+\frac{4c_{F,1}(T_0)}{9T(n)}\log{2}+\frac{4(c_{F,1}(T_0)\log{(T(n))}+c_{F,2}(T_0))}{3T(n)}+\frac{8c_{F,3}(T_0)}{7T(n)^2}\right) \\
			& \quad =\left(\frac{e\tau}{3}n+\frac{\tau}{e}\right)\left(\vphantom{\frac{c_{F,3}(T_0)}{2^hT}}2A_F\log{\left(8ne\tau\right)}+2B_F \right. \\
			& \quad\quad \left.+\frac{4c_{F,1}(T_0)}{9ne\tau}\log{2}+\frac{4(c_{F,1}(T_0)\log{(ne\tau)}+c_{F,2}(T_0))}{3ne\tau}+\frac{8c_{F,3}(T_0)}{7(ne\tau)^2}\right) \\
			& \quad =\frac{2e\tau A_F}{3}n\log{n}+\frac{2e\tau}{3}\left(A_F\log{(8e\tau)}+B_F\right)n+\frac{2\tau A_F}{e}\log{n}+\frac{4c_{F,1}(T_0)}{27}\log{2} \\
			& \quad\quad+\frac{4(c_{F,1}(T_0)\log{(ne\tau)}+c_{F,2}(T_0))}{9}+\frac{2\tau}{e}\left(A_F\log{(8e\tau)}+B_F\right)+\frac{8c_{F,3}(T_0)}{21ne\tau} \\
			& \quad\quad+\frac{4c_{F,1}(T_0)}{9ne^2}\log{2}+\frac{4(c_{F,1}(T_0)\log{(ne\tau)}+c_{F,2}(T_0))}{3ne^2}+\frac{8c_{F,3}(T_0)}{7n^2e^3\tau}.
		\end{align*}
		Since $n \ge e$ is an integer, the right-hand side of the previous formula is
		\[
		\begin{aligned}
			& \le n\log{n}\left(\frac{2e\tau A_F}{3}+\frac{2e\tau}{3}\left|A_F\log{(8e\tau)}+B_F\right|+\frac{2\tau A_F}{3e} +\frac{4c_{F,1}(T_0)}{81}\log{2}\right. \\
			& \left. \quad +\frac{4(c_{F,1}(T_0)\log{(e^2\tau)}+c_{F,2}(T_0))}{27}+\frac{2\tau}{3e}\left|A_F\log{(8e\tau)}+B_F\right|+\frac{8c_{F,3}(T_0)}{189e\tau}\right. \\
			& \left.\quad+\frac{4c_{F,1}(T_0)}{81e^2}\log{2}+\frac{4(c_{F,1}(T_0)\log{(e^2\tau)}+c_{F,2}(T_0))}{27e^2}+\frac{8c_{F,3}(T_0)}{189e^3\tau} \right) \\
			&\quad=K_{F,1}(\tau)n\log{n}.
		\end{aligned}
		\]
\end{proof}

\subsection{Contribution of the zeros with the absolute values of the imaginary parts small enough}
\label{subsContributionSmall}

In this section we investigate the upper bound for the contribution of the zeros with absolute values of the imaginary parts small enough to the coefficients $\Re\left(\lambda_F(n,\tau)\right)$. To obtain the result, we need the following lemma which is proved in Chapter $5$, Theorem $11$ in \cite{montgomery}:

\begin{lemma}
\label{min20}
		Let $M\ge1$ be an integer and let $z_1,z_2,\ldots, z_M$ be complex numbers which satisfy the condition $\max\limits_j |z_j|=1$. Then
		\begin{equation*}
				\max\limits_{1\le n \le 5M}\Re\left(\sum_{j=1}^Mz_j^n\right)\ge \frac{1}{20}.
		\end{equation*}
\end{lemma}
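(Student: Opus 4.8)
\textbf{Proof proposal for Lemma \ref{min20}.}

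The plan is to use the classical Tur\'an-type power sum method. The key idea is that if the real parts of all the power sums $S_n = \sum_{j=1}^M z_j^n$ were small for every $n$ in a sufficiently long range $1 \le n \le 5M$, then one could derive a contradiction by forming an appropriate linear combination of the $S_n$ that, on the one hand, must be small (being a bounded combination of small quantities), but on the other hand can be computed explicitly and shown to be bounded below by a positive constant. Concretely, I would start from the observation that $\sum_{n=0}^{N} S_n w^n$ (for a suitable weight and a suitable $N$) or a kernel of the form $\sum_n a_n S_n$ picks out, via the geometric series $\sum_n z_j^n = \frac{1}{1-z_j}$ type manipulation, a quantity that cannot vanish because at least one $z_j$ has modulus exactly $1$.

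The concrete route I would follow is this. After reindexing, assume $|z_1| = 1$. Consider the generating identity $\sum_{n=1}^{\infty} S_n x^n = \sum_{j=1}^M \frac{z_j x}{1 - z_j x}$ valid for $|x| < 1$, and integrate or evaluate a truncated version. A cleaner finite approach: for a positive integer $N = 5M$, look at the sum $\sum_{n=1}^{N} \left(1 - \frac{n}{N+1}\right) S_n$ (a Fej\'er-type weighting), or more simply use the elementary fact that $\frac{1}{M}\sum_{n=1}^{5M}(\text{something})$ forces a large term. In Montgomery's treatment the argument runs by contradiction: suppose $\Re(S_n) < c$ for all $1 \le n \le 5M$ with $c = 1/20$. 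Pair this with the trivial bound $|S_n| \le M$ for $n \ge 1$ and $S_0 = M$. Then estimate $\left|\sum_{n=0}^{5M} S_n r^n\right|$ for a well-chosen $r \in (0,1)$ (for instance $r$ close to $1$, something like $r = 1 - \frac{1}{M}$) from below by isolating the contribution of the $j=1$ term, which is $\frac{1}{1 - z_1 r}$ and has modulus at least $\frac{1}{1 - r}$ minus a geometric tail, hence is of size $\gtrsim M$; and from above by splitting into the $n=0$ term, the real-part-controlled terms, and the tail $n > 5M$ which is negligible because $r^{5M}$ is exponentially small when $r \approx 1 - 1/M$. Balancing these forces $c \ge 1/20$.

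The main obstacle, and the place where the specific constant $1/20$ is earned, is the bookkeeping in the lower bound: one must show the single term $\frac{z_1 r}{1 - z_1 r}$ genuinely dominates, i.e. that the other $M-1$ terms $\frac{z_j r}{1-z_j r}$ cannot conspire to cancel it. This is handled by taking \emph{real parts}: $\Re\!\left(\frac{z_j r}{1-z_j r}\right) = \frac{r\,\Re(z_j) - r^2|z_j|^2 \cdot 0 \dots}{|1 - z_j r|^2}$ — more precisely $\Re\!\left(\frac{1}{1-z_j r}\right) = \frac{1 - r\Re(z_j)}{|1-z_j r|^2} \ge 0$ whenever $r|z_j| \le 1$, so every term contributes nonnegatively to $\Re\sum_{n\ge 0} S_n r^n = \sum_j \Re\frac{1}{1-z_j r}$, and the $j=1$ term alone gives $\ge \frac{1}{2(1-r)} \gtrsim \frac{M}{2}$. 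Comparing with the upper bound $M + c\sum_{n=1}^{5M} r^n + (\text{tail}) \le M + \frac{c}{1-r} + O(Me^{-5})$ and choosing $r = 1 - 1/M$ yields, after the routine numerical estimates (using $(1-1/M)^{5M} \le e^{-5}$), the stated inequality with constant $1/20$. Since the lemma is quoted verbatim from \cite[Chapter~5, Theorem~11]{montgomery}, I would in the paper simply cite it; the above is the shape of the argument underlying that reference.
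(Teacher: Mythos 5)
The paper offers no proof of this lemma: it is imported verbatim from Montgomery (Chapter 5, Theorem 11 of the cited reference), so your bottom line --- ``I would simply cite it'' --- coincides with what the paper does, and for the paper's purposes that is enough. However, the argument you sketch as the one ``underlying that reference'' is not correct, and the failure is structural, not a matter of bookkeeping. The pivotal step is your claim that the $j=1$ term satisfies $\Re\bigl(\tfrac{1}{1-z_1r}\bigr)\ge\tfrac{1}{2(1-r)}$. This is false unless $z_1$ happens to lie near $1$: a direct computation gives $\min_{|z|=1}\Re\tfrac{1}{1-rz}=\tfrac{1}{1+r}$ (attained at $z=-1$), so the hypothesis $|z_1|=1$ buys you only an absolute constant, never a quantity of size $\asymp\tfrac{1}{1-r}$. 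The tail estimate is also off: $\sum_{n>5M}|S_n|r^n\le Mr^{5M+1}/(1-r)\asymp M^2e^{-5}$ for $r=1-1/M$, not $O(Me^{-5})$.

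More seriously, no repair of this scheme can work. Any argument of this shape chooses weights $a_n\ge0$, sets $f(z)=\sum_na_nz^n$, and compares a lower bound for $\sum_j\Re f(z_j)$ against the upper bound $a_0M+c\sum_{1\le n\le5M}a_n+M\sum_{n>5M}a_n$ that follows from the contradiction hypothesis $\Re(S_n)<c$. To discard the $M-1$ uncontrolled points you need $\Re f\ge0$ on the closed disk, and then the only certified gain is $\Re f(z_1)\ge\min_{|z|=1}\Re f$; but by the mean value property $\min_{|z|=1}\Re f\le\frac{1}{2\pi}\int_0^{2\pi}\Re f(e^{it})\,dt=a_0$, which is already absorbed by the single term $a_0S_0=a_0M$ on the other side (and if $a_0=0$ the certified gain is $\le0$). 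So the lower bound this method can guarantee never exceeds the upper bound it must concede. A telling symptom is that your sketch uses the length $5M$ of the range only through the harmless factor $r^{5M}$, whereas the lemma genuinely requires a range of length a sufficiently large multiple of $M$: for $M=1$ and $z_1=e^{2\pi i/3}$ one has $\Re(z_1^n)=-\tfrac12$ for $n=1,2$, so the statement with $5M$ replaced by $2M$ is false. Any correct proof must therefore exploit the length of the range in an essential way (to force some power $z_1^n$ back toward the positive real axis, or an equivalent positivity); if you ever need the proof rather than the citation, follow Montgomery's argument rather than this sketch.
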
 

Next we apply the above lemma and estimate the contribution of the zeros with the absolute values of the imaginary parts small enough. We also assume that the number $n$ in the $\tau$-Li coefficient $\lambda_F(n,\tau)$ is in a certain interval, and there exists a number $R>1$ such that for some zero $\rho$ it holds that $\left|\frac{\rho}{\rho-\tau}\right|\ge R$. These are not too restrictive assumptions since we would like to apply the result to find at least one number $n$ for which $\Re(\lambda_F(n,\tau))<0$. Using the following we can investigate whether there exists zeros outside certain regions or not. Recall also that $\Re(\rho)=\frac{\tau}{2}$ if and only if $\left|\frac{\rho}{\rho-\tau}\right|=1$.

Before going to the the next theorem, we would like to point out that we cannot remove any elements from the lower bound for the number $N$ described in the following theorem. We have made no other assumptions for the numbers $T_0$, $\tau$ and $R$ than they are greater than certain constants which are smaller than $e$, and the number $R$ can be arbitrary large. Furthermore, the number $M_F$ can be positive, negative or zero and the number $A_F$ can be any positive real number. Since we have also made no specific assumptions for the (numerical) relationships between the numbers $T_0$, $\tau$, $R$, $A_F$ and $B_F$ which would simplify the expressions described in the lower bound for the number $N$, we can conclude that no elements can be removed.

\begin{theorem}
\label{smallZeros}
Let $\tau >0$ be a real number and $T_0$, $A_F$, $B_F$, $C_{F,j}(T_0)$, where $j=1,2,3$, be defined as in condition \ref{numberCondition}. Assume that there is a real number $R>1$ such that there exists at least one zero $\rho$ with $\left|\frac{\rho}{\rho-\tau}\right|\ge R$. Furthermore, we define
\[
		M_F\coloneqq B_F+\frac{C_{F,1}(T_0)}{e}+\frac{C_{F,2}(T_0)}{3}+\frac{C_{F,3}(T_0)}{9}
\]
and assume that for an integer $N$ it holds
\[
N\ge \ceil[\Bigg]{\max\left\{e, T_0,\frac{\tau}{\sqrt{R^2-1}}, e^{\frac{1-15M_F}{15A_F}}\right\}}.
\]
Then
\[
\begin{aligned}
	& \Re\Bigg(\sum\limits_{\substack{\rho\\ |\Im(\rho)|\le N}}\left(1-\left(\frac{\rho}{\rho-\tau}\right)^n \right)\Bigg)<\sum\limits_{\substack{\rho\\ |\Im(\rho)|\le N}}1-\frac{1}{20}R^n
\end{aligned}
\]
for some positive integer $n \in[N, 5N^2(A_F\log{N}+M_F)]$ for which $N \mid n$.
\end{theorem}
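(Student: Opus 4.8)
\emph{Proof proposal.}
The plan is to deduce the statement from the Tur\'an-type power-sum bound of Lemma \ref{min20}. Write $z_\rho\coloneqq\frac{\rho}{\rho-\tau}$ for a zero $\rho$ of $F$. Since $\sum_{|\Im(\rho)|\le N}1$ is a nonnegative integer it cancels from both sides, so the assertion is equivalent to producing a positive integer $n$ with $N\mid n$ and $n\in[N,5N^2(A_F\log N+M_F)]$ for which
\[
\Re\left(\sum_{|\Im(\rho)|\le N}z_\rho^{\,n}\right)>\frac{1}{20}R^n .
\]
I would apply Lemma \ref{min20} to the numbers $z_\rho$ attached to the zeros caught by the truncation $|\Im(\rho)|\le N$, after rescaling so that the largest of their moduli equals $1$. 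The first step is to check that the hypothesised exceptional zero $\rho_0$, with $|z_{\rho_0}|\ge R$, actually lies inside this truncation. Write $\rho_0=\beta+i\gamma$: the inequality $|z_{\rho_0}|>1$ is equivalent to $|\rho_0|>|\rho_0-\tau|$, hence to $\beta>\tau/2$, while condition \ref{locationConditionMore} forces $\beta\le\tau$, so $0<\beta\le\tau$ and $\rho_0$ is among the zeros appearing in \eqref{formulaTauLiThis}; moreover $|z_{\rho_0}|^2\ge R^2$ rearranges to $(R^2-1)\gamma^2\le\beta^2-R^2(\beta-\tau)^2\le\beta^2\le\tau^2$, whence $|\gamma|\le\tau/\sqrt{R^2-1}\le N$ by the lower bound assumed on $N$. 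Thus, letting $w$ be the maximum of $|z_\rho|$ over the finitely many zeros with $|\Im(\rho)|\le N$ (say $M\coloneqq\mathcal{N}_F(N)\ge 1$ of them, counted with multiplicity), we have $w\ge|z_{\rho_0}|\ge R>1$.

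I would then list those zeros as $\rho_1,\dots,\rho_M$ and set $z_j\coloneqq z_{\rho_j}/w$, so that $\max_j|z_j|=1$. To force the exponent produced by the lemma to be a multiple of $N$, I would apply Lemma \ref{min20} not to $z_1,\dots,z_M$ directly but to their $N$-th powers $z_1^N,\dots,z_M^N$, whose largest modulus is still $1$; this supplies an integer $m$ with $1\le m\le 5M$ and $\Re\left(\sum_{j=1}^M z_j^{Nm}\right)\ge\frac{1}{20}$. Putting $n\coloneqq Nm$ we get $N\mid n$, $n\ge N$, and, $w$ being a positive real,
\[
\Re\left(\sum_{|\Im(\rho)|\le N}z_\rho^{\,n}\right)=w^n\,\Re\left(\sum_{j=1}^M z_j^{\,n}\right)\ge\frac{w^n}{20}\ge\frac{R^n}{20},
\]
the required inequality. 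Finally I must check that $n$ lies in the prescribed interval: since $n\le 5NM$, it suffices to prove $\mathcal{N}_F(N)<N(A_F\log N+M_F)$. For this I would apply \eqref{CL} with $T=N$ (permissible because $N\ge T_0$), getting
\[
\mathcal{N}_F(N)<A_FN\log N+B_FN+C_{F,1}(T_0)\log N+C_{F,2}(T_0)+\frac{C_{F,3}(T_0)}{N},
\]
and then bound $\log N\le N/e$ (valid for all $N>0$) together with $1\le N/3$ and $1/N\le N/9$ (valid since $N\ge 3$, which is guaranteed by the ceiling of $\max\{e,\dots\}$); the last three terms are then at most $\left(\frac{C_{F,1}(T_0)}{e}+\frac{C_{F,2}(T_0)}{3}+\frac{C_{F,3}(T_0)}{9}\right)N=(M_F-B_F)N$, which yields $\mathcal{N}_F(N)<N(A_F\log N+M_F)$ by the definition of $M_F$. (The remaining bound $N\ge e^{\frac{1-15M_F}{15A_F}}$ is exactly what makes $A_F\log N+M_F\ge\frac{1}{15}>0$, so the interval for $n$ is non-degenerate.)

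I expect the main difficulty to be bookkeeping rather than conceptual, since Lemma \ref{min20} carries the analytic content. The delicate points are: (i) placing the exceptional zero inside $|\Im(\rho)|\le N$, which is where the hypotheses $N\ge\tau/\sqrt{R^2-1}$ and condition \ref{locationConditionMore} are used; (ii) the $N$-th-power device that raises the lemma's exponent to a multiple of $N$ without losing the factor $R^n$; and (iii) matching the explicit range $[N,5N^2(A_F\log N+M_F)]$, which is exactly what dictates the form of $M_F$ and forces each of the listed lower bounds on $N$ — in agreement with the remark preceding the statement. A last small point is the passage from the non-strict conclusion $\ge\frac{1}{20}$ of Lemma \ref{min20} to the strict inequality of the theorem; this is immediate when $w>R$, and the boundary case $w=R$ needs only a routine additional argument.
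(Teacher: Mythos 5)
Your proposal is correct and follows essentially the same route as the paper: verify the exceptional zero lies within the truncation $|\Im(\rho)|\le N$ via $N\ge\tau/\sqrt{R^2-1}$, rescale the $z_\rho$ by their maximum modulus (your $w$, the paper's $R'$), raise to the $N$-th power before invoking Lemma~\ref{min20} so that the resulting exponent is a multiple of $N$, and bound $\mathcal N_F(N)$ by $N(A_F\log N+M_F)$ using \eqref{CL} together with $\log N\le N/e$, $1\le N/3$, $1/N\le N/9$ and the hypothesis on $N$. The only cosmetic difference is notational ($w$ vs.\ $R'$, writing $z_{\rho_j}/w$ and raising to the $N$ vs.\ writing $z_{\rho_j}=R'r_je^{i\phi_j}$ and forming $r_j^Ne^{iN\phi_j}$); your observation about strict vs.\ non-strict inequality applies equally to the paper's own argument and is immaterial for the downstream application.
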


\begin{proof}
First we prove that there exists an integer $n$ in $$[N, 5N^2(A_F\log{N}+M_F)]$$ such that $N \mid n$. Since we have $N \in \mathbb{Z}$, $N \ge 3$ and $N \ge  e^{\frac{1-15M_F}{15A_F}}$, we obtain
\[
5N(A_F\log{N}+M_F)\ge 15\left(A_F\cdot\frac{1-15M_F}{15A_F}+M_F\right)=1.
\]
Thus such an integer exists.

Next we prove the claim using Lemma \ref{min20}. First we recognize that we can apply Lemma \ref{min20} since there exist only finitely many zeros $\rho$ with $|\Im(\rho)|\le N$. By formula \eqref{CL}, for $N \ge \max\{3, T_0\}$ there are at most
\begin{equation*}
\begin{aligned}
		& A_FN\log{N}+B_FN+C_{F,1}(T_0)\log{N}+C_{F,2}(T_0)+\frac{C_{F,3}(T_0)}{N} \\
		& \quad\le N(A_F\log{N}+M_F)
\end{aligned}
\end{equation*}
zeros $\rho$ with $|\Im(\rho)|\le N$. Let these zeros be $\rho_1, \rho_2, \ldots, \rho_{M}$, where $M$ is a non-negative integer. Furthermore, since $N \ge \frac{\tau}{\sqrt{R^2-1}}$, for all zeros with $\left|\frac{\rho}{\rho-\tau}\right|\ge R$ it also holds that $\Re(\rho)\ge \frac{R\tau}{1+R}$ and
\[
|\Im(\rho)|\le \sqrt{\frac{\Re(\rho)^2-R^2\left(\Re(\rho)-\tau\right)^2}{R^2-1}}\le\frac{\tau}{\sqrt{R^2-1}}\le N.
\]
Thus, and by the assumptions for the number $R$, there also exists $R' \ge R$ such that $R'=\max\limits_{j\in [1,M]} |\frac{\rho_j}{\rho_j-\tau}|$ and $M \ge 1$. Thus we can set $\frac{\rho_j}{\rho_j-\tau}=R'r_j\exp(\phi_j i)$ ($j=1,2,\ldots, M$) where $0 \le r_j \le 1$ and $\phi_j$ are real numbers for all $j$. Then we can apply Lemma \ref{min20} for the complex numbers $z_j=r_j^Ne^{N\phi_j i}$ and get
\begin{equation*}
\begin{aligned}
		\Re\Bigg(\sum\limits_{\substack{\rho\\ |\Im(\rho)|\le N}}\left(1-\left(\frac{\rho}{\rho-\tau}\right)^n \right)\Bigg)&= \sum\limits_{j=1}^{M}\Re\left(1-\left(						\frac{\rho_j}{\rho_j-\tau}\right)^n \right) \\
		&=\sum\limits_{j=1}^{M} 1-R'^n\Re\left(\sum\limits_{j=1}^{M}z_j^\frac{n}{N}\right) \\
		&\le \sum\limits_{\substack{\rho\\ |\Im(\rho)|\le N }} 1-\frac{1}{20}R'^n
\end{aligned}
\end{equation*}
for some integer $\frac{n}{N} \in[1, 5M]$. Since $M< N(A_F\log{N}+M_F)$, the previous inequality holds for some integer $n \in[N, 5N^2(A_F\log{N}+M_F)]$ for which $N \mid n$. Furthermore, since $R\le R'$, the right-hand side of the previous inequality is
\[
\le \sum\limits_{\substack{\rho\\ |\Im(\rho)|\le N}} 1-\frac{1}{20}R^n,
\]
which we wanted to prove.
\end{proof}

\section{Main results}
\label{sectionFindN}

In this section we consider the terms $\Re(\lambda_F(n,\tau))$ and how they are related to zero-free regions. Recall that
$$
		\Re(\lambda_F(n,\tau))=\lim_{t \to \infty}\sum\limits_{\substack{\rho\\ |\Im(\rho)|\le t}}\Re\left(1-\left(\frac{\rho}{\rho-\tau}\right)^n \right).
$$ 
First we prove that if the terms $\Re(\lambda_F(n,\tau))$ are non-negative for all integers $n$ in a certain interval, then there are no zeros $\rho$ with $\left|\frac{\rho}{\rho-\tau}\right|\ge R$. The second theorem takes care of the other case: it states that if at least one term $\Re(\lambda_F(n,\tau))$ is negative for some integer $n$ in a certain interval, then there is at least one zero $\rho$ with $\left|\frac{\rho}{\rho-\tau}\right|> R$. The intervals will be given explicitly. Furthermore, the shape of the region $\left|\frac{\rho}{\rho-\tau}\right|<R$, where $R>1$, is coloured in white in Figure \ref{pic2}. The reason why we consider these kind of regions is that in Theorem \ref{smallZeros} we applied a lower bound for the term $\left|\frac{\rho}{\rho-\tau}\right|$. Furthermore, Brown also investigated these kind of regions. To prove the results we use the results proved in Section \ref{sectionContribution}. 

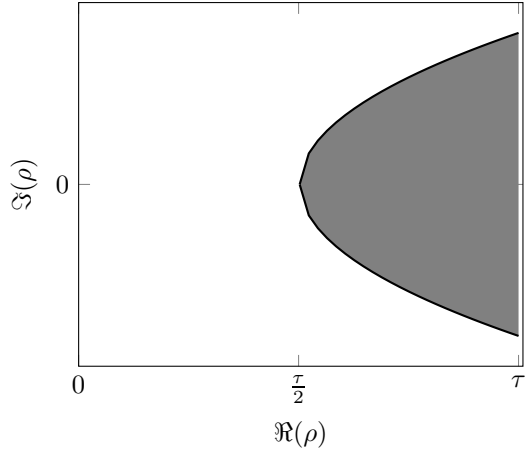
\begin{figure}[t]
\begin{center}
\begin{tikzpicture}
		\begin{axis}[width=0.55\textwidth, legend style={font=\small, at={(0.5,-0.15)},anchor=north},xlabel={$\Re(\rho)$}, ylabel={$\Im(\rho)$}, xmin=0, xmax=1.01, xtick={0,0.5,1}, xticklabels={0, $\frac{\tau}{2}$, $\tau$}, ytick={0}]
						\addplot+[name path=F3, mark=none, color=black, domain=0.502488:1, thick, forget plot] {sqrt((1.01*1.01*(x-1)*(x-1)-x*x)/(1-1.01*1.01))};
						\addplot+[name path=G3, mark=none, color=black, domain=0.502488:1, thick, forget plot] {-sqrt((1.01*1.01*(x-1)*(x-1)-x*x)/(1-1.01*1.01))};
						\addplot+[color=gray, forget plot]fill between[of=F3 and G3, soft clip={domain=0.502488:1}];
		\end{axis}
\end{tikzpicture}
\caption{Region $\left| \frac{\rho}{\rho-\tau}\right|<R$ is in white. The smaller the value $R$ is, the closer the left part of the gray region is to value $\tau/2$. Moreover, the smaller the number $R$ is, the larger the height of the gray region (meaning the largest absolute value of the imaginary parts in it) is.}
\label{pic2}
\end{center}
\end{figure}

In the next two results we use the inverse of the function $xe^x$. Let 
\[
W_0: [-e^{-1},\infty)\to [-1,\infty) \quad \text{and} \quad W_{-1}: [-e^{-1},0)\to(-\infty, -1]
\]
be different branches of the inverse of the function $xe^x$. For these branches we have  
\begin{align*}
& W_{0}(-e^{-1})=-1,\qquad\qquad\qquad \lim\limits_{x\to \infty}W_{0}(x)=\infty, \\
& W_{-1}(-e^{-1})=-1 \qquad \text{and}  \qquad \lim\limits_{x\to 0^{-}}W_{-1}(x)=-\infty.
\end{align*}
Furthermore, we remember that $\tau >\frac{1}{e}$ is a real number. Also, in this section, the terms $T_0$, $A_F$, $B_F$, $c_{F,j}(T_0)$ and $C_{F,j}(T_0)$, where $j=1,2,3$, are defined as in condition \ref{numberCondition}, $K_{F,1}(\tau)$ is defined as in Theorem \ref{largeZeros} and $M_F$ as in Theorem \ref{smallZeros}. Using this notation, let us define
\begin{equation}
\label{defK2}
	\begin{aligned}
		K_{F,2}(\tau)&\coloneqq 5(A_F+|M_F|)\left(\frac{5}{2}+\log{(5e\tau)}+\left|\log{\left(A_F+\frac{|M_F|}{1.732}\right)}\right|\right)\cdot \\
		& \quad \cdot \left(2\tau e^{\frac{5\tau^2M_F+2}{2}}\left(A_F+\frac{|B_F|}{\log{(3e\tau)}}+\frac{C_{F,1}(T_0)}{3e\tau}+\frac{C_{F,2}(T_0)}{3e\tau\log{(3e\tau)}} \right.\right. \\
		& \quad \left.\left.+\frac{C_{F,3}(T_0)}{9(e\tau)^2\log{(3e\tau)}}\right)+K_{F,1}(\tau)\right).
	\end{aligned}
\end{equation}

As before, we would like to point out that we cannot remove any elements from the lower bound for the number $N$ described in the following theorem. The reason behind this is that the elements in the lower bounds depend on different sets of constants.  

Now we are ready to prove the first main result:
\begin{theorem}
\label{mainresult1}
		Let $R>1$, $\tau >\frac{1}{e}$ be real numbers, $T_0$, $A_F$, $B_F$, $c_{F,j}(T_0)$, $C_{F,j}(T_0)$, where $j=1,2,3$, be defined as in condition \ref{numberCondition}, $M_F$ as in Theorem \ref{smallZeros} and $K_{F,2}(\tau)$ as in formula \eqref{defK2}. We define
		\begin{equation*}
				\begin{aligned}
						N &= \ceil[\Bigg]{\max\left\{\frac{\tau}{\sqrt{R^2-1}}, T_0, e^{\frac{1-15M_F}{15A_F}}, \right. \\
						& \quad \left. \exp\left(-W_{-1}\left(-\frac{4}{3(5\tau^2A_F+4)}\log{R}\right)\right), \frac{12\log{(20K_{F,2}(\tau))}}			{\log{R}}\right\}}
				\end{aligned}
		\end{equation*}  
		if $R \le e^{\frac{3(5\tau^2A_F+4)}{4e}}$ and 
		\begin{equation*}
				\begin{aligned}
						N &= \ceil[\Bigg]{\max\left\{e,\frac{\tau}{\sqrt{R^2-1}}, T_0, e^{\frac{1-15M_F}{15A_F}}, \frac{12\log{(20K_{F,2}(\tau))}}{\log{R}}\right\}}
				\end{aligned}
		\end{equation*}
		otherwise. 
		
		If all coefficients $\Re(\lambda_F(n,\tau))$ are non-negative for 
		\[
		    n \in [N,5N^2(A_F\log{N}+M_F)], \quad\text{where}\quad N \mid n,
		\]
		then all zeros $\rho$ satisfy the condition $\left|\frac{\rho}{\rho-\tau}\right|< R$.
\end{theorem}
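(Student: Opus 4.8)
The plan is to argue by contraposition: assuming there exists a zero $\rho$ with $\left|\frac{\rho}{\rho-\tau}\right|\ge R$, I will produce an integer $n$ in the stated range for which $\Re(\lambda_F(n,\tau))<0$. The key decomposition is to split the defining sum for $\Re(\lambda_F(n,\tau))$ at the height $T(n)=ne\tau$ into the contribution of zeros with $|\Im(\rho)|>T(n)$ and those with $|\Im(\rho)|\le T(n)$. For the first piece Theorem \ref{largeZeros} gives the bound $K_{F,1}(\tau)n\log n$ (this applies once $n\ge\max\{e,\frac{1}{e\tau}T_0\}$, which the lower bound on $N$ will guarantee for any multiple $n$ of $N$). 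For the second piece I would apply Theorem \ref{smallZeros}, but with $N$ replaced by $T(n)=ne\tau$ in the role of the cutoff: since $ne\tau\ge T_0$, $ne\tau\ge\tau/\sqrt{R^2-1}$ (because $ne\tau\ge n\ge N\ge\tau/\sqrt{R^2-1}$ and $e\tau>1$), and $ne\tau\ge e^{(1-15M_F)/(15A_F)}$, all hypotheses of Theorem \ref{smallZeros} are met, so there is an integer $n'\in[ne\tau,\ 5(ne\tau)^2(A_F\log(ne\tau)+M_F)]$ with $ne\tau\mid n'$ and
\[
\Re\Bigg(\sum_{|\Im(\rho)|\le ne\tau}\Big(1-\Big(\tfrac{\rho}{\rho-\tau}\Big)^{n'}\Big)\Bigg)<\sum_{|\Im(\rho)|\le ne\tau}1-\tfrac{1}{20}R^{n'}.
\]
Here is the subtle point that makes the argument circular unless handled carefully: Theorem \ref{smallZeros} produces its own index $n'$, and the cutoff $ne\tau$ itself depends on $n$. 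The right way is to run the whole thing once: set the cutoff parameter to be $N$ directly (not $ne\tau$), apply Theorem \ref{smallZeros} with that $N$ to get an index $n\in[N,5N^2(A_F\log N+M_F)]$ with $N\mid n$, and then check that for this particular $n$ the large-zero cutoff $T(n)=ne\tau$ exceeds $N$, so the zeros counted in Theorem \ref{smallZeros}'s sum (those with $|\Im(\rho)|\le N$) are exactly the zeros \emph{not} covered by Theorem \ref{largeZeros}'s tail bound — but this requires that no zeros lie in the annulus $N<|\Im(\rho)|\le ne\tau$, which is false in general. So instead the tail in Theorem \ref{largeZeros} must be taken from height $ne\tau$, and the ``small'' part must cover \emph{all} zeros up to height $ne\tau$; I would therefore apply Theorem \ref{smallZeros} with cutoff exactly $ne\tau$ where $n$ is the index, i.e.\ solve the fixed-point-like condition by noting that Theorem \ref{smallZeros}'s statement is scale-flexible — feed it $N$, obtain $n$, and observe $n\ge N$ so $ne\tau\ge N e\tau> N$; the zeros with $N<|\Im(\rho)|\le ne\tau$ must then be absorbed into the count $\sum_{|\Im(\rho)|\le ne\tau}1$, which is controlled by \eqref{CL} applied at height $ne\tau$.

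Combining the two pieces, for the index $n$ we get
\[
\Re(\lambda_F(n,\tau))<\Big(\sum_{|\Im(\rho)|\le ne\tau}1\Big)-\tfrac{1}{20}R^{n}+K_{F,1}(\tau)n\log n,
\]
and the crucial step is to bound $\sum_{|\Im(\rho)|\le ne\tau}1$ using \eqref{CL}: it is at most $A_F(ne\tau)\log(ne\tau)+B_F(ne\tau)+\cdots$, which after using $n\ge e$ and $e\tau>1$ is at most roughly $2\tau e^{(5\tau^2M_F+2)/2}(A_F+\cdots)\,n\log n$ — precisely the bracketed factor appearing inside $K_{F,2}(\tau)$ in \eqref{defK2}. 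Hence the count of small zeros plus the large-zero tail is at most $\frac{1}{5}K_{F,2}(\tau)\cdot n\log n\cdot(\text{some explicit factor})$; tracking constants, one shows this is $<\frac{1}{20}R^n$ whenever $n$ is large enough that $R^n$ dominates $n\log n$ times a constant, i.e.\ whenever $n\ge \frac{12\log(20K_{F,2}(\tau))}{\log R}$ — which is exactly one of the terms in the lower bound for $N$. For this one needs $n\log n\le R^{n/12}$ type estimates, valid once $n\ge 12$ and $n\ge 12\log(20K_{F,2}(\tau))/\log R$; the remaining bound involving $W_{-1}$ and the dichotomy on $R\lessgtr e^{3(5\tau^2A_F+4)/(4e)}$ comes from comparing $n\log n$ against $R^n$ more sharply (the inverse of $xe^x$ enters when solving an inequality of the shape $\log n\le c\, n/\log R$ via $n = e^{-W_{-1}(\cdots)}$). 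Thus $\Re(\lambda_F(n,\tau))<0$ for this $n\in[N,5N^2(A_F\log N+M_F)]$ with $N\mid n$, contradicting the hypothesis, and the theorem follows.

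The main obstacle I anticipate is the bookkeeping around the two different indices and the two different height cutoffs: Theorem \ref{smallZeros} internally generates an index in a range that depends on $ne\tau$, and one must verify that a single index $n$ can simultaneously (i) be a valid output of Theorem \ref{smallZeros} at cutoff $ne\tau$, (ii) lie in $[N,5N^2(A_F\log N+M_F)]$ with $N\mid n$, and (iii) satisfy $ne\tau\ge T_0$ and the large-$n$ inequalities needed for Theorem \ref{largeZeros} and for $R^n$ to dominate. This is purely a matter of choosing the order of quantifiers correctly and checking that each lower-bound term on $N$ is used exactly once; no new idea is needed, but the constant-chasing in \eqref{defK2} — especially extracting the factor $2\tau e^{(5\tau^2M_F+2)/2}$ from \eqref{CL} at height $ne\tau\le n^2$ and matching it against the structure of $K_{F,2}(\tau)$ — is where the work lies. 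The dichotomy on the size of $R$ (with the $W_{-1}$ branch) is a secondary technical wrinkle handling the regime where $R$ is close to $1$ and $R^n$ grows slowly, requiring a sharper comparison with $n\log n$.
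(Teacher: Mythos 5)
Your high-level plan (contraposition; split at height $T(n)=ne\tau$; Theorem \ref{largeZeros} for the tail, Theorem \ref{smallZeros} applied at cutoff $N$ for the bottom; account for the annulus $N<|\Im(\rho)|\le T(n)$ separately) matches the paper's three-part decomposition \eqref{realLambda}. But there is a genuine gap in how you handle the annulus. You propose to absorb those zeros into $\sum_{|\Im(\rho)|\le ne\tau}1$ "controlled by \eqref{CL}", which implicitly assumes each annulus zero contributes $O(1)$ to $\Re(\lambda_F(n,\tau))$. That is false. For a zero with $|\Im(\rho)|$ just above $N$ and an index $n$ as large as $5N^2(A_F\log N+M_F)$, one only has $\left|\frac{\rho}{\rho-\tau}\right|^2\le 1+\tau^2/N^2$, so
\[
\left|\Re\!\left(\Big(\tfrac{\rho}{\rho-\tau}\Big)^n\right)\right|
\le \left(1+\frac{\tau^2}{N^2}\right)^{n/2}
\approx e^{5\tau^2 M_F/2}\,N^{5\tau^2 A_F/2},
\]
which grows as a power of $N$, not as a constant. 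The paper's proof makes exactly this estimate its central technical step for the middle term; it then multiplies by the zero count from \eqref{CL}, which is where the factor $N^{(5\tau^2A_F+4)/2}$ and the exponential $e^{(5\tau^2M_F+2)/2}$ in $K_{F,2}(\tau)$ actually come from. You do cite the factor $2\tau e^{(5\tau^2M_F+2)/2}(A_F+\cdots)$ from \eqref{defK2}, but attribute it to "using $n\ge e$ and $e\tau>1$" in bounding a zero count, which is not where it originates and does not produce the required power of $N$. Without the per-zero bound above, your displayed inequality $\Re(\lambda_F(n,\tau))<\sum_{|\Im(\rho)|\le ne\tau}1-\frac{1}{20}R^n+K_{F,1}(\tau)n\log n$ is unjustified and the constant-chasing that follows (the $R\lessgtr e^{3(5\tau^2A_F+4)/(4e)}$ dichotomy and the $W_{-1}$ threshold, which in the paper arise from comparing $N^{(5\tau^2A_F+4)/2}\log^2 N$ against $R^N$ via inequality \eqref{smallZero}) cannot be carried out. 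The fix is exactly the paper's: bound each middle term by $1+e^{5\tau^2M_F/2}N^{5\tau^2A_F/2}$, then sum via \eqref{CL} at height $T(n)$, and only then split $\log R$ into the pieces $\frac13+\frac18+\frac{1}{24}$ matched against $\frac{5\tau^2A_F+4}{4}\frac{\log N}{N}$, $\frac{\log\log N}{N}$, and $\frac{\log(20K_{F,2}(\tau))}{2N}$.
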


\begin{proof}
		The main idea of the proof is to show that if there exists at least one zero $\rho$ with $\left|\frac{\rho}{\rho-\tau}\right|\ge R$, then $\Re(\lambda_F(n,\tau))$ is negative for some
		\[
		    n \in [N,5N^2(A_F\log{N}+M_F)], \quad\text{where}\quad N \mid n.
		\]
		
		First we notice that according to the first paragraph of the proof of Theorem \ref{smallZeros}, there exists an integer $n \in [N,5N^2(A_F\log{N}+M_F)]$ with $N \mid 			n$. Thus we can consider integers in $[N,5N^2(A_F\log{N}+M_F)]$.

		For all  $n \in [N,5N^2(A_F\log{N}+M_F)]$ we denote $T(n)\coloneqq ne\tau$. Clearly $T(n)>N$ and we have
		\begin{align}
				\Re(\lambda_F(n,\tau)) &=\lim_{t \to \infty}\sum_{\substack{T(n)<|\Im(\rho)|\le t}}\Re\left(1-\left(\frac{\rho}{\rho-\tau}\right)^n \right) \nonumber\\
				& \quad+\sum_{\substack{N<|\Im(\rho)|\le T(n)}}\Re\left(1-\left(\frac{\rho}{\rho-\tau}\right)^n \right) \label{realLambda}\\
				& \quad+\sum_{\substack{|\Im(\rho)|\le N}}\Re\left(1-						\left(\frac{\rho}{\rho-\tau}\right)^n \right) \nonumber.
		\end{align}
		The proof consists of estimating the three terms on the right-hand side of formula \eqref{realLambda}. 
		
		First we consider the second term. Remember that for the terms $x=\Re\left(\frac{\rho}{\rho-\tau}\right)$ and $y=\Im\left(\frac{\rho}{\rho-\tau}\right)$, defined in formulas \eqref{defX} and \eqref{defY}, we have proved that $|x| \le 1$
		and $|y|\le \frac{\tau}{|\Im(\rho)|}$ (see the first paragraph of the proof of Theorem \ref{largeZeros}). Thus 
		\[
				\left|\Re\left(\left(\frac{\rho}{\rho-\tau}\right)^n \right) \right|=\left|\Re\left(\left(x+yi\right)^n\right)\right|\le \left(1+\frac{\tau^2}{\Im(\rho)^2}\right)^\frac{n}{2}.
		\]
		Furthermore, since $n\le 5N^2(A_F\log{N}+M_F)$, for $|\Im(\rho)| > N$ the right-hand side is
		\[
		    < \left(1+\frac{\tau^2}{N^2}\right)^{\frac{N^2}{\tau^2}\cdot\frac{5\tau^2}{2}(A_F\log{N}+M_F)} <e^{\frac{5\tau^2M_F}{2}}N^{\frac{5\tau^2A_F}{2}}.
		\]

		By the previous estimate and Theorems \ref{largeZeros} and \ref{smallZeros}, the right-hand side of formula \eqref{realLambda} is
		\begin{equation}
		\label{firstStepSmaller}
		\begin{aligned}
				& <K_{F,1}(\tau)n\log{n}+e^{\frac{5\tau^2M_F}{2}}N^{\frac{5\tau^2A_F}{2}}\sum_{\substack{N<|\Im(\rho)|\le T(n)}} \left(e^{-\frac{5\tau^2M_F}{2}}					N^{-\frac{5\tau^2A_F}{2}}+1\right) \\
				& \quad+\sum\limits_{\substack{\rho\\ |\Im(\rho)|\le N}}1-\frac{1}{20} R^n 
		\end{aligned}
		\end{equation}
		for some integer $n \in [N,5N^2(A_F\log{N}+M_F)]$ for which $N \mid n$. We prove that the expression given in formula \eqref{firstStepSmaller} is less than zero for all $n \in [N,5N^2(A_F\log{N}+M_F)]$. First we consider the second and the third term of the formula \eqref{firstStepSmaller}. Thus we would like to estimate the term $e^{-\frac{5\tau^2M_F}{2}}N^{-\frac{5\tau^2A_F}{2}}$ and hence the term $M_F$. As we have already noticed, we have $5N(A_F\log{N}+M_F)\ge 1$ and thus 
		$$M_F\ge \frac{1}{5N}-A_F\log{N}>-A_F\log{N}.$$ 
		It follows that
		\begin{align*}
				& e^{\frac{5\tau^2M_F}{2}}N^{\frac{5\tau^2A_F}{2}}\sum_{\substack{N<|\Im(\rho)|\le T(n)}} \left(e^{\frac{-5\tau^2M_F}{2}}N^{\frac{-5\tau^2A_F}{2}}+1\right)+\sum\limits_{\substack{|\Im(\rho)|\le N}}1 \\
				& \quad < e^{\frac{5\tau^2M_F}{2}}N^{\frac{5\tau^2A_F}{2}}\left(\sum_{\substack{N<|\Im(\rho)|\le T(n)}} \left(1+1\right)+\sum\limits_{\substack{|\Im(\rho)|\le N}}	1\right) \\
				& \quad \le  e^{\frac{5\tau^2M_F}{2}}N^{\frac{5\tau^2A_F}{2}}\sum_{\substack{|\Im(\rho)|\le T(n)}} 2.
		\end{align*}
		By formula \eqref{CL} and since $T(n)=ne\tau\le 5e\tau N^2\left(A_F\log{N}+M_F\right)$ and $n\ge N\ge 3$, the previous formula is
		\begin{gather*}
		\begin{split}
				& < 2e^{\frac{5\tau^2M_F}{2}}N^{\frac{5\tau^2A_F}{2}}T(n)\log{T(n)}\left(A_F+\frac{B_F}{\log{T(n)}} \right. \\
				& \quad \left.+\frac{C_{F,1}(T_0)}{T(n)}+\frac{C_{F,2}(T_0)}{T(n)\log{T(n)}}+\frac{C_{F,3}(T_0)}{T(n)^2\log{T(n)}}\right) \\
				&\le 10\tau e^{\frac{5\tau^2M_F+2}{2}}N^{\frac{5\tau^2A_F+4}{2}}(A_F\log{N}+M_F)\log{\left(5e\tau N^2(A_F\log{N}+M_F)\right)}\cdot \\
				& \quad \cdot\left(A_F+\frac{|B_F|}{\log{(3e\tau)}}+\frac{C_{F,1}(T_0)}{3e\tau}+\frac{C_{F,2}(T_0)}{3e\tau\log{(3e\tau)}}+									\frac{C_{F,3}(T_0)}{9(e\tau)^2\log{(3e\tau)}}\right).
		\end{split}
		\end{gather*}
		Furthermore, since $\log{N}<\sqrt{N}$, the right-hand side of the previous inequality is
		\begin{equation}
		\label{eqSecondThird}
		\begin{aligned}
		& < 10\tau e^{\frac{5\tau^2M_F+2}{2}}N^{\frac{5\tau^2A_F+4}{2}}(A_F\log{N}+M_F)\left(\log{(5e\tau)}+2\log{N}+\log{\sqrt{N}}\right.\\
		& \quad \left.+\log{\left(A_F+\frac{|M_F|}{\sqrt{N}}\right)}\right)\cdot\left(A_F+\frac{|B_F|}{\log{(3e\tau)}} \right. \\
		& \quad \left. +\frac{C_{F,1}(T_0)}{3e\tau}+\frac{C_{F,2}(T_0)}{3e\tau\log{(3e\tau)}}+\frac{C_{F,3}(T_0)}{9(e\tau)^2\log{(3e\tau)}}\right) \\
		& < 10\tau e^{\frac{5\tau^2M_F+2}{2}}N^{\frac{5\tau^2A_F+4}{2}}(A_F\log{N}+M_F)\left(\frac{5}{2}\log{N}+\log{(5e\tau)}\right.\\
		& \quad \left.+\log{\left(A_F+\frac{|M_F|}{1.732}\right)}\right)\cdot\left(A_F+\frac{|B_F|}{\log{(3e\tau)}} \right. \\
		& \quad \left. +\frac{C_{F,1}(T_0)}{3e\tau}+\frac{C_{F,2}(T_0)}{3e\tau\log{(3e\tau)}}+\frac{C_{F,3}(T_0)}{9(e\tau)^2\log{(3e\tau)}}\right).
		\end{aligned}
		\end{equation}
		 We have estimated the second and the third term of formula \eqref{firstStepSmaller}.
		
		Next we consider the first and the last term of formula \eqref{firstStepSmaller}. Similarly as before, for $n \in [N,5N^2(A_F\log{N}+M_F)]$ and $N\ge 3$ we have $\log{N}<\sqrt{N}$ and
		\begin{gather}
		\label{eqK1RN}
		\begin{split}
					& K_{F,1}(\tau)n\log{n}-\frac{1}{20}R^n \\
					& <5N^2 K_{F,1}(\tau)(A_F\log{N}+M_F)\log{\left(5N^2(A_F\log{N}+M_F)\right)}-\frac{1}{20}R^N \\
					& < 5N^2 K_{F,1}(\tau)(A_F\log{N}+M_F)\cdot \\
					& \quad \cdot\left(\frac{5}{2}\log{N}+\log{5}+\log{\left(A_F+\frac{|M_F|}{1.732}\right)}\right)-\frac{1}{20}R^N.
		\end{split}
		\end{gather}
		Thus we have also estimated the first and last the term of formula \eqref{firstStepSmaller}.

		Now we can combine the previous computations and estimate formula \eqref{firstStepSmaller}. By estimates \eqref{eqSecondThird} and \eqref{eqK1RN}
		\begin{align}
				& K_{F,1}(\tau)n\log{n}+e^{\frac{5\tau^2M_F}{2}}N^{\frac{5\tau^2A_F}{2}}\sum_{\substack{N<|\Im(\rho)|\le T(n)}} \left(e^{-\frac{5\tau^2M_F}{2}}		N^{-\frac{5\tau^2A_F}{2}}+1\right) \nonumber\\
				& \quad+\sum\limits_{\substack{\rho\\ |\Im(\rho)|\le N}}1-\frac{1}{20} R^n  \nonumber\\
				& \quad <5N^{\frac{5\tau^2A_F+4}{2}}(A_F\log{N}+M_F)\cdot \label{eqSecondLF}\\
				& \quad\quad\cdot\left(\frac{5}{2}\log{N}+\log{(5e\tau)}+\log{\left(A_F+\frac{|M_F|}{1.732}\right)}\right)\cdot
				\label{eqSecondL} \\
				& \quad\quad \cdot \left(2\tau e^{\frac{5\tau^2M_F+2}{2}}\left(A_F+\frac{|B_F|}{\log{(3e\tau)}}+\frac{C_{F,1}(T_0)}{3e\tau}+\frac{C_{F,2}(T_0)}							{3e\tau\log{(3e\tau)}} \right.\right. \label{eqSecondLastL}\\
				& \quad\quad \left. \left. +\frac{C_{F,3}(T_0)}{9(e\tau)^2\log{(3e\tau)}}\right)+K_{F,1}(\tau)\right)-\frac{1}{20}R^N \label{eqLastL}.
		\end{align}	
		Since on lines \eqref{eqSecondLastL} and \eqref{eqLastL} in the previous inequality the only term which depends on the number $N$ is the term $R^N$, it is sufficient to estimate lines \eqref{eqSecondLF} and \eqref{eqSecondL} in the previous inequality. Since $N > e$, we have
		\begin{align*}
				& 5N^{\frac{5\tau^2A_F+4}{2}}(A_F\log{N}+M_F)\left(\frac{5}{2}\log{N}+\log{(5e\tau)}+\log{\left(A_F+\frac{|M_F|}{1.732}\right)}\right) \\
				& \quad < 5(A_F+|M_F|)N^{\frac{5\tau^2A_F+4}{2}}\log^2{N}\cdot\\
				& \quad\quad\cdot\left(\frac{5}{2}+\log{(5e\tau)}+\left|\log{\left(A_F+\frac{|M_F|}{1.732}\right)}\right|\right).
		\end{align*}
		Thus formula \eqref{firstStepSmaller} is
		\begin{equation*}
				< K_{F,2}(\tau)N^{\frac{5\tau^2A_F+4}{2}}\log^2{N}-\frac{1}{20}R^N.
		\end{equation*}

		We want to prove that the previous expression is at most zero for the number $N$. This can be equivalently written as
		\begin{equation}
		\label{smallZero}
		\begin{aligned}
				&  \left(\frac{5\tau^2 A_F}{4}+1\right)\frac{\log{N}}{N}+\frac{\log{\log{N}}}{N}+\frac{\log{(20K_{F,2}(\tau))}}{2N} \\
				& \quad \le\frac{1}{3}\log{R}+\frac{1}{8}\log{R}+\frac{1}{24}\log{R}.
		\end{aligned}
		\end{equation}
		We prove this in three parts. The coefficients $1/3, 1/8$ and $1/24$ are chosen because the term $\left(\frac{5\tau^2 A_F}{4}+1\right)\log{N}>\log{N}$ grows faster than the term $\log{\log{N}}$, this grows faster than a constant term and for all $N\ge e$ we have $3\log{N}\ge 8\log\log{N}$.

		First we prove that the first term on the left-hand side of inequality \eqref{smallZero} is at most the first term on the right-hand side of the inequality. This can be equivalently written as
 		\begin{equation}
		\label{eqMainLarger}
				-e^{-\log{N}}\log{N}\ge -\frac{4}{3(5\tau^2A_F+4)}\log{R}.
		\end{equation}          
		First we notice that if $R > e^{\frac{3(5\tau^2A_F+4)}{4e}}$, then for all $N \ge e$ we have
		\begin{equation*}
				-e^{-\log{N}}\log{N}=-\frac{\log{N}}{N}\ge -\frac{1}{e}> -\frac{4}{3(5\tau^2A_F+4)}\log{R}
		\end{equation*}
		and inequality \eqref{eqMainLarger} holds. Furthermore, since by the definition of the number $N$ given in formulation of the theorem we have
		\[
				N\ge \exp\left(-W_{-1}\left(-\frac{4}{3(5\tau^2A_F+4)}\log{R}\right)\right)\quad \text{if} \quad R \le e^{\frac{3(5\tau^2A_F+4)}{4e}}
		\]
		and the function $W_{-1}(x)$ is decreasing for $x \in (-1/e,0)$, inequality \eqref{eqMainLarger} holds also in this case and the inequality is proved. Morever, using estimate \eqref{eqMainLarger} for $N>1$ we also have
		\[
			\frac{\log{\log{N}}}{N}\le\frac{3}{8}\frac{\log{N}}{N}<\frac{3}{8}\left(\frac{5\tau^2 A_F}{4}+1\right)\frac{\log{N}}{N}\le \frac{3}{8}\cdot\frac{1}{3}\log{R}=\frac{1}{8}\log{R}.
		\]
		 Next we compare the third terms on the left- and right-hand side of inequality \eqref{smallZero}. Since $N \ge \frac{12\log{(20K_{F,2}(\tau))}}{\log{R}}$, we have $\frac{\log{(20K_{F,2}(\tau))}}{2N}\le \frac{1}{24}\log{R}$. Thus we have proved the claim.	
\end{proof}

\begin{remark}
We notice that the number $N$ in the previous theorem is not the best one. We used upper bounds for the terms instead of the exact values. On the other hand, the term $R^N$ grows faster than $N^{\frac{5\tau^2A_F+4}{2}}\log^2{N}$ and the smallest $N$ for which the term
$$
		K_{F,2}(\tau)N^{\frac{5\tau^2A_F+4}{2}}\log^2{N}-\frac{1}{20}R^N
$$
is non-positive depends mainly on the term $R$. Thus the number $N$ given by Theorem \ref{mainresult1} is good enough.
\end{remark}

Next we prove that if at least one of the real parts of the $\tau$-Li coefficients is negative in a certain interval, then there is at least one zero $\rho$ with $\left|\frac{\rho}{\rho-\tau}\right|\ge R$. To obtain the result, we use the notation where 
\begin{equation}
\label{defK3}
\begin{aligned}
		K_{F,3}(T,\tau)& \coloneqq \frac{0.432\tau^2}{T}\left(\frac{A_F}{2}\log{(2T)}+\frac{A_F\log{4}+B_F}{2} \right.\\
		& \quad \left.-\frac{c_{F,1}(T_0)}{3T}\log(2^\frac{1}{3}T) -\frac{c_{F,2}(T_0)}{3T}-\frac{2c_{F,3}(T_0)}{7T^2}\right ).
\end{aligned}
\end{equation}

In the following theorem we have an upper bound for the number $R$. This is not a too restrictive assumption since a smaller number $R$ means larger regions which do not have zeros and we would like to find as large areas as possible without zeros. Furthermore, we would like to notice that we cannot remove any elements from the lower bound for the term $T$. The reason is that most of the elements depend on different constants, and depending on the size of the term $\frac{B_F}{A_F}$ the term $\exp\left(-\frac{96B_F}{23A_F} \right)$ is sometimes greater than the term $\exp\left(\frac{0.324B_F}{(e^2-1.296)A_F}\right)$ and sometimes not. Furthermore, even though by equations \eqref{cFjBounds} we can bound the terms $c_{F,j}(T_0)$ ($j=1,2$) by the terms $C_{F,j}(T_0)$ ($j=1,2$), it is not enough for simplifying the lower bound for the term $T$.

Now we move on to the next theorem: 
\begin{theorem}
\label{mainresult2}
		Let $\tau >\frac{1}{e}$ be a real number and $T_0$, $A_F$, $B_F$, $c_{F,j}(T_0)$, $C_{F,j}(T_0)$, where $j=1,2,3$, be defined as in condition \ref{numberCondition} and $K_{F,3}(T,\tau)$ as in formula \eqref{defK3}. Furthermore, let $T$ be a real number for which 
		\begin{align*}
				T &> \max\left\{T_0, \exp\left(-\frac{96B_F}{23A_F} \right), \exp\left(\frac{0.324B_F}{(e^2-1.296)A_F}\right),  \right. \\
				& \quad\left. \frac{8c_{F,1}(T_0)}{3A_F}, \frac{8c_{F,2}(T_0)}{3A_FW_0\left(\frac{16c_{F,2}(T_0)}{3A_F}\right)}, \frac{96C_{F,1}(T_0)}{23A_F},  \right.\\
				&\left. \quad\frac{96C_{F,2}(T_0)}{23A_FW_0\left(\frac{96C_{F,2}(T_0)}{23A_F}\right)},\sqrt{\frac{192C_{F,3}(T_0)}{23A_FW_0\left(\frac{192C_{F,3}(T_0)}{23A_F}\right)}}\right\}.
		\end{align*}
		Further, let $R>1$ be a real number such that 
		\begin{equation*}
				R \le \exp\left(4W_0\left(\sqrt{\frac{K_{F,3}(T,\tau)}{4e^2\mathcal{N}_F(T)}}\right)\right).
		\end{equation*} 
		We denote
		\begin{equation*}
		\begin{aligned}
				n_0 &=\max\left\{1,\ceil[\Bigg]{\frac{1}{2}-\frac{2}{\log{R}}W_{0}\left(-\frac{\log{R}}{2}\sqrt{\frac{\mathcal{N}_F(T)}{K_{F,3}(T,\tau)}}			\exp\left(\frac{\log{R}}{4}\right)\right)}\right \}
		\end{aligned}
		\end{equation*}  
		and
		\begin{equation*}
		\begin{aligned}
				n_1&=\min\left\{\frac{T}{e\tau},\floor[\Bigg]{\frac{1}{2}-\frac{2}{\log{R}}W_{-1}\left(-\frac{\log{R}}{2}\sqrt{\frac{\mathcal{N}_F(T)}{K_{F,3}(T,\tau)}}								\exp\left(\frac{\log{R}}{4}\right)\right)}\right\}.
		\end{aligned}
		\end{equation*}  
		
		If the term $\Re(\lambda_F(n,\tau))$ is negative for some $n \in [n_0,n_1]$, then there exists at least one zero $\rho$ with $\left|\frac{\rho}{\rho-\tau}\right| \ge R$.
\end{theorem}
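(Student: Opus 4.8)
The plan is to prove the contrapositive: assuming that every zero $\rho$ of $F$ satisfies $\left|\tfrac{\rho}{\rho-\tau}\right|<R$, show that $\Re(\lambda_F(n,\tau))\ge 0$ for every integer $n\in[n_0,n_1]$. The first thing to settle is what the definitions of $n_0,n_1$ really say. Put $L=\log R$ and, for $w=W(u)$ with $u=-\tfrac{L}{2}\sqrt{\mathcal{N}_F(T)/K_{F,3}(T,\tau)}\,e^{L/4}$, substitute $m=\tfrac12-\tfrac2L w$ into $we^w=u$; after cancelling the common factor $-\tfrac L2 R^{1/4}$ and squaring this becomes $(m-\tfrac12)^2 R^{-m}=\mathcal{N}_F(T)/K_{F,3}(T,\tau)$. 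The function $x\mapsto(x-\tfrac12)^2R^{-x}$ vanishes at $x=\tfrac12$, increases to its maximum $\tfrac{4}{e^2L^2}R^{-1/2}$ at $x^{*}=\tfrac12+\tfrac2L$ (where the corresponding $w$ equals $-1$) and then decreases to $0$; so the $W_0$-value is its smaller root and the $W_{-1}$-value its larger root. Consequently every integer $n\in[n_0,n_1]$ satisfies $1\le n\le\tfrac{T}{e\tau}$ and
\[
		K_{F,3}(T,\tau)\bigl(n-\tfrac12\bigr)^2\ \ge\ \mathcal{N}_F(T)\,R^{\,n},
\]
while the hypothesis $R\le\exp\!\bigl(4W_0(\sqrt{K_{F,3}(T,\tau)/(4e^2\mathcal{N}_F(T))})\bigr)$ is exactly what guarantees $\tfrac{4}{e^2L^2}R^{-1/2}\ge \mathcal{N}_F(T)/K_{F,3}(T,\tau)$, i.e. that this equation has real roots; the long list of lower bounds on $T$ is, term by term, what makes $K_{F,3}(T,\tau)$ (and each bracketed term in \eqref{defK3}) positive.

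Now split, as in Section~\ref{sectionContribution},
\[
		\Re(\lambda_F(n,\tau))=\sum_{|\Im(\rho)|\le T}\Re\!\Bigl(1-\bigl(\tfrac{\rho}{\rho-\tau}\bigr)^{n}\Bigr)+\lim_{t\to\infty}\sum_{T<|\Im(\rho)|\le t}\Re\!\Bigl(1-\bigl(\tfrac{\rho}{\rho-\tau}\bigr)^{n}\Bigr)
\]
and estimate the two pieces in opposite directions. For the first piece, $\Re\!\bigl(1-(\tfrac{\rho}{\rho-\tau})^n\bigr)\ge 1-\bigl|\tfrac{\rho}{\rho-\tau}\bigr|^{n}>1-R^{\,n}$ by the contrapositive hypothesis, and, since by definition~\eqref{formulaTauLiThis} together with condition~\ref{locationConditionMore} only zeros with $0\le\Re(\rho)\le\tau$ occur, there are exactly $\mathcal{N}_F(T)$ zeros in this sum; hence the first piece is $>\mathcal{N}_F(T)(1-R^{\,n})>-\mathcal{N}_F(T)R^{\,n}$.

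The heart of the matter is a \emph{positive} lower bound for the second piece. Write $\tfrac{\rho}{\rho-\tau}=x+iy$ with $x,y$ as in \eqref{defX}--\eqref{defY}; from the first paragraph of the proof of Theorem~\ref{largeZeros}, $0\le x\le1$ (with $x=1$ only when $\Re(\rho)=\tau$) and $|y|\le\tfrac{\tau}{|\Im(\rho)|}<\tfrac1{ne}$ since $|\Im(\rho)|>T\ge ne\tau$, and one also has the matching lower bounds $y^2\ge\tfrac{\tau^2}{(1+\tau^2/\Im(\rho)^2)^2\Im(\rho)^2}$ and $x^{n-2}\ge(1-\tau^2/\Im(\rho)^2)^{n-2}$, both within a controlled factor of $1$ because $\tau^2/\Im(\rho)^2<1/(n^2e^2)$. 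The binomial theorem gives $\Re\bigl(1-(x+iy)^n\bigr)=(1-x^n)+\binom n2 x^{n-2}y^2+E$, where $(1-x^n)\ge 0$ because $x\le1$, and $|E|\le\sum_{k\ge2}\binom n{2k}y^{2k}<\tfrac{(ne\tau)^4}{240\,\Im(\rho)^4}$ by the same geometric-series estimate as in \eqref{eqBinom} (legitimate since $n|y|<1/e$). Thus $\Re\bigl(1-(\tfrac{\rho}{\rho-\tau})^n\bigr)\ge \binom n2 x^{n-2}y^2-\tfrac{(ne\tau)^4}{240\,\Im(\rho)^4}$, and a short one–variable estimate over $\Re(\rho)\in[0,\tau]$ (with $|\Im(\rho)|$ fixed; the worst case is $\Re(\rho)=\tau$, where $x=1$ and $\binom n2 x^{n-2}y^2=\binom n2\tau^2/\Im(\rho)^2$) shows the main term is $\ge\binom n2\tau^2/\Im(\rho)^2$. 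Summing over $|\Im(\rho)|>T$ by a dyadic decomposition and the lower bound in \eqref{zeros24} (keeping the $-c_{F,j}(T_0)$ terms — this is where the conditions on $T$ enter) turns $\tau^2\sum_{|\Im(\rho)|>T}\Im(\rho)^{-2}$ into $K_{F,3}(T,\tau)/0.432$, while the $\Im(\rho)^{-4}$ error is absorbed by the choice $0.432<\min_{n\ge2}\binom n2/(n-\tfrac12)^2$; so
\[
		\lim_{t\to\infty}\sum_{T<|\Im(\rho)|\le t}\Re\!\Bigl(1-\bigl(\tfrac{\rho}{\rho-\tau}\bigr)^{n}\Bigr)\ \ge\ K_{F,3}(T,\tau)\bigl(n-\tfrac12\bigr)^2
\]
for $n\ge2$ (for $n=1$ this is immediate from $\Re(\lambda_F(1,\tau))=\sum_\rho\tau(\tau-\Re(\rho))|\rho-\tau|^{-2}\ge0$). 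Adding the two pieces, for every integer $n\in[n_0,n_1]$,
\[
		\Re(\lambda_F(n,\tau))>K_{F,3}(T,\tau)\bigl(n-\tfrac12\bigr)^2-\mathcal{N}_F(T)R^{\,n}\ \ge\ 0
\]
by the first paragraph, which is the contrapositive of the assertion.

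The step I expect to be the main obstacle is this positive lower bound for the contribution of the high zeros: one has to notice that the only binomial term that can be negative, $1-x^n$, is in fact $\ge0$ thanks to $\Re(\rho)\le\tau$, so that the surviving quadratic term $\binom n2 x^{n-2}y^2$ (of size $\asymp n^2\tau^2/\Im(\rho)^2$) dominates; then one must carry out the geometric-series bound on the higher-order terms exactly as in Theorem~\ref{largeZeros}, perform the one–variable minimisation over $\Re(\rho)$, and — the fiddly part — track constants carefully enough that after summation one lands precisely on $K_{F,3}(T,\tau)(n-\tfrac12)^2$, which is what pins down both $0.432$ and the auxiliary constant $1.296$. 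A secondary, purely computational difficulty is the first paragraph: recognising $[n_0,n_1]$ as the set of integers $n$ with $1\le n\le T/(e\tau)$ and $K_{F,3}(T,\tau)(n-\tfrac12)^2\ge\mathcal{N}_F(T)R^n$ through the two branches of $W$, and verifying from the explicit lower bounds on $T$ that $K_{F,3}(T,\tau)>0$ and that every intermediate inequality is legitimate.
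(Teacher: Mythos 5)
Your overall strategy is the paper's: prove the contrapositive, split the sum at height $T$, bound the low zeros trivially by $\mathcal{N}_F(T)(1-R^n)$, show the high zeros contribute a \emph{positive} quantity of size $\asymp K_{F,3}(T,\tau)\,n^2$ via a binomial expansion whose only dangerous term is non-negative because $\Re(\rho)\le\tau$, sum dyadically with \eqref{zeros24}, and invert the resulting inequality with the two branches of the Lambert $W$ function. Your reading of $n_0,n_1$ and of the condition on $R$ is correct and matches the paper. The paper expands $\left(1+\tfrac{\tau}{\rho-\tau}\right)^n$ rather than $(x+iy)^n$, but that is a cosmetic difference.

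There is, however, one step that fails as stated: the claimed lower bound
$\lim_{t\to\infty}\sum_{T<|\Im(\rho)|\le t}\Re\bigl(1-(\tfrac{\rho}{\rho-\tau})^{n}\bigr)\ge K_{F,3}(T,\tau)(n-\tfrac12)^2$.
Your justification of the ``main term'' has the direction reversed: $\Re(\rho)=\tau$ is where $\binom n2x^{n-2}y^2$ is \emph{largest} (there $x=1$ and $y^2=\tau^2/\Im(\rho)^2$ exactly), not a worst case, so the one-variable minimisation gives $\binom n2x^{n-2}y^2\le\binom n2\tau^2/\Im(\rho)^2$, and the true lower bound carries the factors $(1+\tau^2/\Im(\rho)^2)^{-2}$ and $x^{n-2}$ you listed. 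Your safety margin $0.432<\min_{n\ge2}\binom n2/(n-\tfrac12)^2=4/9\approx0.444$ is about $2.7\%$, but already for $n=2$ and a zero with $\Re(\rho)=0$, $|\Im(\rho)|$ just above $2e\tau$, the correction factor $(1+1/(4e^2))^{-2}\approx0.936$ eats more than that, so the constant does not close. The paper proves only the weaker bound $K_{F,3}(T,\tau)\,n(n-1)$ (its $0.432$ is calibrated against $n(n-1)$, with minimum ratio $1$ rather than $4/9$ against $(n-\tfrac12)^2$), and recovers the missing $K_{F,3}/4$ after completing the square by \emph{keeping} the $+\mathcal{N}_F(T)$ you discarded from the low-zero bound and proving $K_{F,3}(T,\tau)\le 4\mathcal{N}_F(T)$; that last inequality is where the lower bounds on $T$ involving $C_{F,j}(T_0)$ and $\exp\bigl(\tfrac{0.324B_F}{(e^2-1.296)A_F}\bigr)$ are used (via $\mathcal{N}_F(T)\ge\tfrac{A_F}{24}T\log T$), a role your sketch attributes entirely to the positivity of $K_{F,3}$. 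With that reallocation — $n(n-1)$ for the high zeros, $\mathcal{N}_F(T)(1-R^n)$ retained in full, and the extra verification $K_{F,3}/(4\mathcal{N}_F(T))\le1$ — your argument lands on the same final inequality $K_{F,3}(T,\tau)(n-\tfrac12)^2\ge\mathcal{N}_F(T)R^n$ and goes through.
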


\begin{proof}
        For the proof the contraposition will be used. Thus we show that there are no zeros $\rho$ with $\left|\frac{\rho}{\rho-\tau}\right| \ge R$, then the terms $\Re(\lambda_F(n,\tau))$ are non-negative for all $n \in [n_0,n_1]$. 
        
		First we prove the case $n=1$. Since 
		\begin{equation*}
				\Re\left(1-\frac{\rho}{\rho-\tau}\right)=\frac{\tau(\tau-\Re(\rho))}{|\rho-\tau|^2}\ge 0,
		\end{equation*}
		the coefficient $\lambda_F(1,\tau)\ge 0$. Thus it is enough to consider the cases $n \ge 2$. We can estimate the $\tau$-Li coefficients by first considering the contribution of the zeros with absolute values of the imaginary parts greater than $T$ and then the contribution of zeros with the absolute values at most $T$. Indeed, we have
		\begin{equation}
		\label{liupper}
				\begin{aligned}
						\Re(\lambda_F(n,\tau ))&=\lim_{t\to \infty}\sum_{\substack{\rho \\ |\Im(\rho)|\le t}}\Re\left(1-\left(\frac{\rho}{\rho-\tau}\right)^n\right) \\
						&= \lim_{t\to \infty}\sum_{T<|\Im(\rho)|\le t}\Re\left(1-\left(\frac{\rho}{\rho-\tau}\right)^n\right)  \\
						& \quad+ \sum_{|\Im(\rho)|\le T}\Re\left(1-\left(\frac{\rho}{\rho-\tau}\right)^n\right).							
				\end{aligned}
		\end{equation}
		We estimate each term for $n \in [n_0,n_1]$.

		First we estimate the sum over the zeros with $|\Im(\rho)|>T$. Since
		\begin{align}
						\Re\left(1-\left(\frac{\rho}{\rho-\tau}\right)^n\right) &=\Re\left(1-\left(1+\frac{\tau}{\rho-\tau}\right)^n\right) \nonumber\\
						& = -n\tau\Re\left(\frac{1}{\rho-\tau}\right)-\frac{n(n-1)\tau^2}{2}\Re\left(\frac{1}{(\rho-\tau)^2}\right) \label{lower}\\
						&\quad-\sum\limits_{j=3}^n\binom{n}{j}\Re\left(\left(\frac{\tau}{\rho-\tau}\right)^j\right) \nonumber,
		\end{align}
		we can estimate each term on the right-hand side separately. First we estimate the first term on the right-hand side. Since $\Re(\rho)\le \tau$, we can compute
		\begin{equation*}
				\Re\left(\frac{1}{\rho-\tau}\right)=\frac{\Re(\rho)-\tau}{|\rho-\tau|^2}\le 0.
		\end{equation*}
		Thus the first term on the right-hand side of the equation \eqref{lower} is non-negative.

 		Next, we have
		\begin{equation*}
				\begin{aligned}
						\Re\left(\frac{1}{(\rho-\tau)^2}\right)=\frac{(\Re(\rho)-\tau)^2-\Im(\rho)^2}{|\rho-\tau|^4}.
				\end{aligned}
		\end{equation*}
		We notice that the term in the right-hand side of the previous equality is negative since $|\Im(\rho)|> ne\tau>\tau$. Thus we want to have large values in the denominator. Hence, the right-hand side of the previous equation is
		\begin{equation*}
				\begin{aligned}
						& \le \frac{\tau^2-\Im(\rho)^2}{(\tau^2+\Im(\rho)^2)^2}.	
				\end{aligned}
		\end{equation*}

		Finally we estimate the third term on the right-hand side of equation \eqref{lower}. Recall that $n\le n_1 \le \frac{T}{e\tau}$. Hence, for $j \ge 3$ and $|\Im(\rho)|>T$ we have
		\begin{equation*}
		\begin{aligned}
				\left|\binom{n}{j}\left(\frac{\tau}{\rho-\tau}\right)^j\right|&\le\left|\frac{n(n-1)(n-2)n^{j-3}}{j!}\left(\frac{\tau}{|\Im(\rho)|}\right)^j\right| \\
				&\le \frac{\tau^3n(n-1)(n-2)}{e^{j-3}j!|\Im(\rho)|^3}.
		\end{aligned}
		\end{equation*}
		Thus the third term on the right-hand side of equation \eqref{lower} is
		\begin{equation*}
				\begin{aligned}
						& \ge-\frac{\tau^3n(n-1)(n-2)}{|\Im(\rho)|^3}\sum_{j=3}^n\frac{1}{e^{j-3}j!} \\
						& >-\frac{\tau^3n(n-1)(n-2)}{|\Im(\rho)|^3}\sum_{j=3}^\infty \frac{1}{e^{j-3}j!} \\
						& >-\frac{0.184\tau^3n(n-1)(n-2)}{|\Im(\rho)|^3}.
				\end{aligned}
		\end{equation*}
		Using obtained bounds we can estimate the right-hand side of equation \eqref{lower}.

		By the above derived estimates we have
		\begin{equation}
		\label{eqfirstTogether}
		\begin{aligned}
				& \Re\left(1-\left(\frac{\rho}{\rho-\tau}\right)^n\right) \\
				&\quad>-\frac{n(n-1)\tau^2}{\Im(\rho)^2}\left(\frac{\tau^2-\Im(\rho)^2}{2(\frac{\tau^2}{|\Im(\rho)|}+|\Im(\rho)|)^2}+\frac{0.184(n-2)\tau}{|\Im(\rho)|}\right). 
		\end{aligned}		
		\end{equation}
		Further, since $|\Im(\rho)|>T\ge ne\tau$, we also have $\frac{0.184(n-2)\tau}{|\Im(\rho)|}<\frac{0.184}{e}$. The function
		\[
				\frac{\tau^2-\Im(\rho)^2}{2(\frac{\tau^2}{|\Im(\rho)|}+|\Im(\rho)|)^2}+\frac{0.184}{e}	
		\]
		is negative and decreasing in variable $\Im(\rho)$ for all $|\Im(\rho)|>T>ne\tau>2\tau$. This means that since the term $\frac{0.184(n-2)\tau}{|\Im(\rho)|}$ is a non-negative and decreasing function in variable $|\Im(\rho)|$, we can set $|\Im(\rho)|=ne\tau$ in the right-hand side of inequality \eqref{eqfirstTogether}.
		We obtain that the the right-hand side of formula \eqref{eqfirstTogether} is
		\begin{align*}
				&>-\frac{n(n-1)\tau^2}{\Im(\rho)^2}\left(\frac{1-(ne)^2}{2\left(\frac{1}{ne}+ne\right)^2}+\frac{0.184(n-2)}{ne}\ \right).
		\end{align*}
		The expression inside the brackets is negative for all $n \ge 2$. It has a stationary point $s \approx 2.837$ and it is an increasing function for $n> s$ and decreasing for $n \in [2,s]$. Thus the previous formula is
		\[
		\begin{aligned}
		    & \ge \frac{n(n-1)\tau^2}{\Im(\rho)^2}\min\left\{\frac{-1+(2e)^2}{2\left(\frac{1}{2e}+2e\right)^2},\lim_{n \to \infty}\left(\frac{-1+(ne)^2}{2\left(\frac{1}{ne}+ne			\right)^2}-\frac{0.184(n-2)}{ne} \right)\right\} \\
		    &>\frac{0.432n(n-1)\tau^2}{\Im(\rho)^2}.
		\end{aligned}
		\]
		Thus we have estimated the right-hand side of equation \eqref{lower}, and using this estimate we can estimate the first term on the right-hand side of equation \eqref{liupper}.
		
		Since the number of zeros in each interval $(2^hT,2^{h+1}T]$ is non-negative, by the previous estimates and formula \eqref{zeros24} we have
		\begin{align}
		\label{lifirst}
				& \lim_{t\to \infty}\sum_{\substack{\rho \\ T<|\Im(\rho)|\le t}}\Re\left(1-\left(\frac{\rho}{\rho-\tau}\right)^n\right) \nonumber \\
				& \quad >\sum\limits_{h=0}^\infty \sum\limits_{|\Im(\rho)|\in (2^hT,2^{h+1}T]} \frac{0.432n(n-1)\tau^2}{\Im(\rho)^2} \nonumber \\
				& \quad\ge\sum\limits_{h=0}^\infty \sum\limits_{|\Im(\rho)|\in (2^hT,2^{h+1}T]} \frac{0.432n(n-1)\tau^2}{2^{2h+2}T^{2}}  \nonumber \\
				& \quad > \frac{0.432n(n-1)\tau^2}{T}\sum\limits_{h=0}^\infty \left(\frac{A_F\log{(2^hT)}}{2^{h+2}}+\frac{A_F\log{4}+B_F}{2^{h+2}} \right.  \\
				& \quad\quad \left. -\frac{c_{F,1}(T_0)\log{(2^{h}T)}}{2^{2h+2}T}-\frac{c_{F,2}(T_0)}{2^{2h+2}T}-\frac{c_{F,3}(T_0)}{2^{3h+2}T^2} 									\vphantom{\frac{A_F\log{(2^hT)}}{2^{h+2}}}\right) \nonumber \\
				& \quad= \frac{0.432n(n-1)\tau^2}{T}\left(\frac{A_F}{2}\log{(2T)}+\frac{A_F\log{4}+B_F}{2} \right.\nonumber \\
				& \quad\quad \left.-\frac{c_{F,1}(T_0)}{3T}\log(2^\frac{1}{3}T) -\frac{c_{F,2}(T_0)}{3T}-\frac{2c_{F,3}(T_0)}{7T^2}\right) \nonumber \\
				& \quad=K_{F,3}(T,\tau)n(n-1) \nonumber.
		\end{align}
		Further, by the assumptions made in the formulation of the theorem, we have
		\begin{equation}
		\label{Tbound1}
		\begin{aligned}
		        T &> \max\left\{\exp\left(-\frac{96B_F}{23A_F} \right), \exp\left(\frac{0.324B_F}{(e^2-1.296)A_F}\right), \right. \\
				& \quad\left. \frac{8c_{F,1}(T_0)}{3A_F},  \frac{8c_{F,2}(T_0)}{3A_FW_0\left(\frac{16c_{F,2}(T_0)}{3A_F}\right)},
				\sqrt{\frac{192C_{F,3}(T_0)}{23A_FW_0\left(\frac{192C_{F,3}(T_0)}{23A_F}\right)}}\right\}.
		\end{aligned}		
		\end{equation}
		By direct computations and using the first two terms and the last term from the right-hand side of inequality \eqref{Tbound1} and the third inequality \eqref{cFjBounds} we also have
		\begin{equation*}
		\begin{aligned}
				T &> \max\left\{\frac{1}{2}\exp\left(-\frac{4}{A_F}(A_F\log{4}+B_F)\right), \sqrt{\frac{32c_{F,3}(T_0)}{7A_FW_0\left(\frac{128c_{F,3}(T_0)}{7A_F}\right)}}\right\}.
		\end{aligned}		
		\end{equation*}
		Thus we can divide the term $\frac{A_F}{2}\log{(2T)}$ from inequality \eqref{lifirst} by $4$, compare it to the other terms from inequality \eqref{lifirst}, use the previous lower bounds and obtain that the right-hand side of inequality \eqref{lifirst} is greater than zero. The reason why we divide by $4$ is that we do not know how large the terms $A_F$, $B_F$ and $c_{F,j}(T_0)$ ($j=1,2,3$) are and thus we do not know the optimal way to prove that the right-hand side of inequality \eqref{lifirst} is greater than zero. Hence, our choice is just simply divide by the number of the terms. This is not a crucial problem, see Remark \ref{remarkK3}.
		
		Next we estimate the second term on the right-hand side of inequality \eqref{liupper} and then combine the results. Since $|\frac{\rho}{\rho-\tau}|< R$ for all $\rho$, we have
		\begin{equation*}
				\begin{aligned}
						& \sum_{|\Im(\rho)|\le T}\Re\left(1-\left(\frac{\rho}{\rho-\tau}\right)^n\right)>\mathcal{N}_F(T)(1-R^n).
				\end{aligned}
		\end{equation*}
		Thus, and by inequalities \eqref{liupper} and \eqref{lifirst}, we have
		\begin{equation*}
					\lambda_F(n,\tau )>K_{F,3}(T,\tau)n(n-1)+\mathcal{N}_F(T)(1-R^n)
		\end{equation*}
		for $n \in [n_0,n_1]$. We want to prove that the right-hand side of the previous inequality is at least zero. 
		Equivalently, we can write
		\begin{equation}
		\label{eqK3step}
				\frac{K_{F,3}(T,\tau)}{\mathcal{N}_F(T)}\left(n-\frac{1}{2}\right)^2\ge R^n+\frac{K_{F,3}(T,\tau)}{4\mathcal{N}_F(T)}-1.
		\end{equation}
		We want to prove that 
		\begin{equation}
		\label{ineqK3}
		    \frac{K_{F,3}(T,\tau)}{4\mathcal{N}_F(T)}-1 \le 0
		\end{equation}
		because this implies that it is sufficient to consider the inequality
		\begin{equation}
		\label{wantGreater}
				\frac{K_{F,3}(T,\tau)}{\mathcal{N}_F(T)}\left(n-\frac{1}{2}\right)^2\ge R^n.
		\end{equation}
		
		Next we prove inequality \eqref{ineqK3}. First we prove $\mathcal{N}(T)\ge\frac{A_F}{24}T\log{T}$ and then, using this estimate, that inequality \eqref{wantGreater} holds. By formula \eqref{CL} it is sufficient to show that the following inequality holds:
		\begin{equation}
		\label{eq23AT}
				\frac{23}{24}A_FT\log{T}\ge-B_FT+C_{F,1}(T_0)\log{T}+C_{F,2}(T_0)+\frac{C_{F,3}(T_0)}{T}.
		\end{equation}
		Similarly as before, we can divide the term on the left-hand side by four and then compare it to the terms on the right-hand side. This is done because of the similar reasons as before i.e. since we do not know how large the terms $A_F$, $B_F$ and $C_{F,j}(T_0)$ ($j=1,2,3$) are. Since we have
		\[
		\begin{aligned}
				T&>\max\left\{\vphantom{ \sqrt{\frac{192C_{F,3}(T_0)}{23A_FW_0\left(\frac{192C_{F,3}(T_0)}{23A_F}\right)}}}
				\exp\left(-\frac{96B_F}{23A_F} \right), \frac{96C_{F,1}(T_0)}{23A_F}, \right. \\
				&\left.\quad \frac{96C_{F,2}(T_0)}{23A_FW_0\left(\frac{96C_{F,2}(T_0)}{23A_F}\right)},  \sqrt{\frac{192C_{F,3}(T_0)}{23A_FW_0\left(\frac{192C_{F,3}(T_0)}{23A_F}\right)}} \right\},
		\end{aligned}		
		\]
		and $W_0(x)$ is a decreasing function for $x>0$, inequality \eqref{eq23AT} holds. Thus we have $\mathcal{N}_F(T)\ge\frac{A_F}{24}T\log{T}$ and we can apply it to prove the estimate \eqref{eqK3step}.
		
		By the definition of the term $K_{F,3}(T,\tau)$ and since the terms $c_{F,j}(T_0)$, where $j=1,2,3$, are non-negative real numbers, to prove inequality \eqref{wantGreater} it is sufficient to show 
		\[
				0.432\tau^2\left(\frac{A_F}{2}\log{(2T)}+\frac{A_F\log{4}+B_F}{2}\right)\le \frac{1}{6}A_FT^2\log{T}.
		\]
		For all $T \ge ne\tau > 2$ and $n\ge 2$, we have
		\begin{equation}
		\label{eqSome2}
		    0.432\tau^2 \frac{A_F}{2}\log{(2T)}< \frac{0.432A_F}{(ne)^2} T^2\log{(T)} \le \frac{0.108A_F}{e^2}T^2\log{T}.
		\end{equation}
	    Furthermore, since we have also assumed 
		$
		        T>\exp\left(\frac{0.324B_F}{(e^2-1.296)A_F}\right),
		$
		we obtain
		\begin{equation}
		\label{eqSome}
		    	0.432\tau^2\frac{A_F\log{4}+B_F}{2}<\frac{0.432}{8e^2}T^2\left(2A_F\log{T}+B_F\right)< \left(\frac{1}{6}-\frac{0.108}{e^2}\right)A_FT^2\log{T}.
		\end{equation}
		Thus by combining inequalities \eqref{eq23AT}, \eqref{eqSome2} and \eqref{eqSome}, we have proved inequality \eqref{ineqK3}. To obtain the estimates for the terms $\Re\left(\lambda_F(n,\tau)\right)$, we need to prove inequality \eqref{wantGreater}.
		
		Taking a square root and dividing by $\exp\left(\frac{\log{R}}{2}\left(n-\frac{1}{2}\right)\right)$ ($n\ge 0.5$) and by $\sqrt{\frac{K_{F,3}(T,\tau) }{\mathcal{N}_F(T)}}$, inequality \eqref{wantGreater} can be equivalently written as
		\[
		\begin{aligned}
				\left(n-\frac{1}{2}\right)\exp\left(-\frac{\log{R}}{2}\left(n-\frac{1}{2}\right)\right)\ge \sqrt{\frac{\mathcal{N}_F(T)}{K_{F,3}(T,\tau)}}\exp\left(\frac{\log{R}}{4}\right).
		\end{aligned}		
		\]
		We multiply the inequality by $-\frac{\log{R}}{2}$ and obtain
		\[
		\begin{aligned}
				& -\frac{\log{R}}{2}\left(n-\frac{1}{2}\right)\exp\left(-\frac{\log{R}}{2}\left(n-\frac{1}{2}\right)\right) \\
				& \quad\le -\frac{\log{R}}{2}\sqrt{\frac{\mathcal{N}_F(T)}{K_{F,3}(T,\tau)}}\exp\left(\frac{\log{R}}{4}\right).
		\end{aligned}		
		\]
		This holds for $n \in [n_0,n_1]$, where
		\begin{equation*}
		\begin{aligned}
				n_0 &\ge \frac{1}{2}-\frac{2}{\log{R}}W_{0}\left(-\frac{\log{R}}{2}\sqrt{\frac{\mathcal{N}_F(T)}{K_{F,3}(T,\tau)}}		\exp\left(\frac{\log{R}}{4}\right)\right),
		\end{aligned}
		\end{equation*}  
		\begin{equation*}
		\begin{aligned}
				n_1& \le \frac{1}{2}-\frac{2}{\log{R}}W_{-1}\left(-\frac{\log{R}}{2}\sqrt{\frac{\mathcal{N}_F(T)}{K_{F,3}(T,\tau)}}								\exp\left(\frac{\log{R}}{4}\right)\right)
		\end{aligned}
		\end{equation*}
		and
		\begin{equation*}
				R \in \left(1,\exp\left(4W_0\left(\sqrt{\frac{K_{F,3}(T,\tau)}{4e^2\mathcal{N}_F(T)}}\right)\right)\right].
		\end{equation*} 
		
		Thus $\lambda_F(n,\tau )\ge 0$ also for any number $n \ge 2$ which lies in the interval $[n_0,n_1]$. This proves the claim.
\end{proof}

\begin{remark}
\label{remarkK3}
As in Theorem \ref{mainresult1}, the values of the numbers $n_0$ and $n_1$ in the previous theorem are not the best possible. They have been obtained by estimating the term $\lambda_F(n,\tau)$ and then estimating the result. Thus we lose precision. On the other hand, the term $R^n$ grows faster than the term $n(n-1)$ and thus the terms $n$ for which the expression
\begin{equation}
\label{formulaKR}
    K_{F,3}(T,\tau)n(n-1)+\mathcal{N}_F(T)(1-R^n)
\end{equation}
is non-negative mainly depend on the term $R$. 
\end{remark}

\begin{remark}
From formula \eqref{formulaKR} we also recognize that since
\[
\lim_{R \to 1} (1-R^n)=0,
\]
there always exist solutions for the terms $n_0$ and $n_1$ when the number $R$ is close enough to the number $1$.
\end{remark}

\section{Only one zero which lies outside of a certain region and a growth condition}
\label{case1Zero}

In this section, we consider relationships between growth conditions for the coefficients $\Re(\lambda_F(n, \tau))$ and existence of exactly one zero outside a certain region.

Besides of the non-negativity conditions, there are also growth conditions for the Li coefficients which imply the Generalized Riemann Hypothesis. For example, in 2006 A. Voros \cite{voros} proved that the Riemann Hypothesis is equivalent to the condition $\lambda_n \sim n(a\log{n}+b)$ with explicit $a>0$ and $b$. In 2010 and 2011 S. Omar and K. Mazhouda \cite{omar} and A. Od\v{z}ak and L. Smajlovi\'{c} \cite{odzak} derived similar conditions for certain classes containing the Selberg class. 

Furthermore, considering at most one zero outside a certain region is an interesting question. If we know that there exists at most one zero outside a certain region, then it is sufficient to consider how at most one zero affects the $\tau$-Li coefficients. These kind of results are known, for example, for Dirichlet $L$-functions, as we have already mentioned (recall results \eqref{mccurleyRegion},\eqref{kadiriRegion}).

We prove a growth condition for the coefficients $\Re(\lambda_F(n, \tau))$ whether the function $F(s)$ has exactly one zero $\rho_1$ with $|\frac{\rho_1}{\rho_1-\tau}|>1$ or not and we know a lower bound for the term $|\frac{\rho_1}{\rho_1-\tau}|$. The growth condition is same type as Voros' result i.e. $O(n\log{n})$. The result can be applied to the $\tau$-Li coefficients to determine whether the function $F(s)$ has exactly one zero outside a certain region or not. The advantages of the result compared to Theorems \ref{mainresult1} and \ref{mainresult2} are that we don't have to compute as many values $n$ as in those theorems. Furthermore, Theorem \ref{resultExp1} gives an equivalent condition while Theorems \ref{mainresult1} and \ref{mainresult2} do not.

Before we move on to the next theorem, let us introduce a new symbol in order to make the expressions little bit shorter. As before, let $\tau >\frac{1}{e}$ be a real number. The terms $T_0$, $A_F$, $B_F$ and $C_{F,j}(T_0)$, where $j=1,2,3$, are defined as in condition \ref{numberCondition} and the term $K_{F,1}(\tau)$ is defined as in Theorem \ref{largeZeros}. Using this notation, let us denote
\begin{equation}
\label{defK4}
\begin{aligned}
K_{F,4}(\tau)&\coloneqq 2\left(\vphantom{\frac{C_{F,3}(T_0)}{e^3\tau}}A_Fe\tau\log{(e^2\tau)}+|B_F|e\tau \right. \\
& \quad  \left.+\frac{C_{F,1}(T_0)\log{(e^2\tau)}+C_{F,2}(T_0)}{3}+\frac{C_{F,3}(T_0)}{9e\tau}\right).
\end{aligned}
\end{equation}

As before, we notice that we cannot remove any elements from the sets used in definitions for the term $N$. The elements depend on different constants. 

Now we can move on to the next theorem:

\begin{theorem}
\label{resultExp1}
Let $\tau >\frac{1}{e}$ be a real number and $T_0$, $A_F$, $B_F$, $c_{F,j}(T_0)$, $C_{F,j}(T_0)$, where $j=1,2,3$, be defined as in condition \ref{numberCondition}, $K_{F,1}(\tau)$ as in Theorem \ref{largeZeros} and $K_{F,4}(\tau)$ as in formula \eqref{defK4}. Suppose that the function $F(s)$ has at most one zero $\rho_1$ with $|\frac{\rho_1}{\rho_1-\tau}|>1$. Furthermore, we also assume that if such a zero $\rho_1$ exists, then $R>1$ is a real number such that $|\frac{\rho_1}{\rho_1-\tau}|\ge R$. 
Let
\[
\begin{aligned}
N &= \ceil[\Bigg]{\max\left\{\frac{1}{e\tau}T_0, \exp\left(-W_{-1}\left(-\frac{2\log{R}}{3}\right)\right), \right.\\
&\quad \left.\frac{12\log{\left(40\left(0.5+K_{F,1}(\tau)+K_{F,4}(\tau)\right)\right)}}{\log{R}}\right\}}
\end{aligned}
\]
if $R \le e^{\frac{3}{2e}}$, and
\[
N = \ceil[\Bigg]{\max\left\{e, \frac{1}{e\tau}T_0, \frac{12\log{\left(40\left(0.5+K_{F,1}(\tau)+K_{F,4}(\tau)\right)\right)}}{\log{R}}\right\}}
\]
otherwise. 

The zero $\rho_1$ exists if and only if 
\[
|\Re(\lambda_F(n, \tau))|\ge \left(K_{F,1}(\tau)+K_{F,4}(\tau)\right)n\log{n}
\]
for at least one integer $n \in [N, 5N]$ where $N \mid n$.
\end{theorem}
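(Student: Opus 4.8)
The plan is to estimate $\Re(\lambda_F(n,\tau))$ for suitable $n$ by splitting the sum over the zeros according to the size of $|\Im(\rho)|$ and, when it is present, by isolating the exceptional zero $\rho_1$. Fix $n$ with $N\mid n$ and $n\in[N,5N]$ (such $n$ exist by the first paragraph of the proof of Theorem \ref{smallZeros}) and put $T(n)\coloneqq ne\tau$. Since $N\ge e$ and $N\ge T_0/(e\tau)$ we have $T(n)\ge T_0$ and $n\ge 3$, so Theorem \ref{largeZeros} bounds the contribution of the zeros with $|\Im(\rho)|>T(n)$ by $K_{F,1}(\tau)n\log n$ in absolute value. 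Next, a zero has $|\frac{\rho}{\rho-\tau}|\le 1$ precisely when $\Re(\rho)\le\tau/2$, and then $\Re\big(1-(\frac{\rho}{\rho-\tau})^n\big)\in[0,2]$; hence the contribution of the zeros with $|\Im(\rho)|\le T(n)$ and $|\frac{\rho}{\rho-\tau}|\le 1$ lies in $[0,2\mathcal N_F(T(n))]$, and a term-by-term comparison using \eqref{CL} with $T=T(n)$ together with $\log(ne\tau)\le\log n\cdot\log(e^2\tau)$ (valid for $n\ge e$ since $\tau>\frac1e$) gives $2\mathcal N_F(T(n))<K_{F,4}(\tau)n\log n$, with $K_{F,4}(\tau)$ as in \eqref{defK4}. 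Finally, if $\rho_1$ exists then $\Re(\rho_1)>\tau/2$ and, as in the proof of Theorem \ref{smallZeros}, $|\Im(\rho_1)|\le\tau/\sqrt{R^2-1}$; the lower bounds imposed on $N$ are chosen precisely so that $T(n)=ne\tau\ge Ne\tau\ge\tau/\sqrt{R^2-1}$, so $\rho_1$ sits among the zeros with $|\Im(\rho)|\le T(n)$ and is then the only zero in the whole sum with $|\frac{\rho}{\rho-\tau}|>1$.

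For the "only if" direction I will argue by contraposition. If $\rho_1$ does not exist, then every zero has $|\frac{\rho}{\rho-\tau}|\le 1$, so for every admissible $n$ the identity
\[
\Re(\lambda_F(n,\tau))=\lim_{t\to\infty}\sum_{T(n)<|\Im(\rho)|\le t}\Re\Big(1-\Big(\tfrac{\rho}{\rho-\tau}\Big)^n\Big)+\sum_{|\Im(\rho)|\le T(n)}\Re\Big(1-\Big(\tfrac{\rho}{\rho-\tau}\Big)^n\Big)
\]
together with the two bounds of the previous paragraph (the first term has absolute value $<K_{F,1}(\tau)n\log n$, the second lies in $[0,K_{F,4}(\tau)n\log n)$) gives $|\Re(\lambda_F(n,\tau))|<(K_{F,1}(\tau)+K_{F,4}(\tau))n\log n$ for every $n\in[N,5N]$ with $N\mid n$. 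This is exactly the negation of the right-hand condition.

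For the "if" direction, suppose $\rho_1$ exists and write $\frac{\rho_1}{\rho_1-\tau}=R_1e^{i\phi}$ with $R_1\ge R>1$. Apply Lemma \ref{min20} with $M=1$ to $z_1=e^{iN\phi}$: there is $j\in\{1,\dots,5\}$ with $\cos(Nj\phi)\ge\frac1{20}$. Put $n=Nj\in[N,5N]$, so $N\mid n$; then $\Re\big(1-(\frac{\rho_1}{\rho_1-\tau})^n\big)=1-R_1^n\cos(n\phi)\le 1-\frac1{20}R^n$. Combining this with the tail bound and the near-sum bound from the first paragraph,
\[
\Re(\lambda_F(n,\tau))\le\big(K_{F,1}(\tau)+K_{F,4}(\tau)\big)n\log n+1-\tfrac1{20}R^n .
\]
If the right-hand side is $\le-(K_{F,1}(\tau)+K_{F,4}(\tau))n\log n$, equivalently if $\tfrac1{20}R^n\ge 2(K_{F,1}(\tau)+K_{F,4}(\tau))n\log n+1$, then $|\Re(\lambda_F(n,\tau))|\ge(K_{F,1}(\tau)+K_{F,4}(\tau))n\log n$, which is the claimed inequality for this $n$.

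It remains to verify $\tfrac1{20}R^n\ge 2(K_{F,1}(\tau)+K_{F,4}(\tau))n\log n+1$ for every $n\in[N,5N]$, and this is where the definition of $N$ does its work. Setting $Q\coloneqq 40\big(\tfrac12+K_{F,1}(\tau)+K_{F,4}(\tau)\big)$, the inequality reduces for $n\ge 3$ to $R^n\ge Qn\log n$; since $R^n-Qn\log n$ is increasing in $n$ for $n\ge N$, it suffices to check it at $n=N$, i.e. $N\log R\ge\log Q+\log N+\log\log N$. The bound $N\ge\frac{12\log Q}{\log R}$ absorbs $\log Q$ with room to spare, the elementary estimate $\log\log N\le\frac38\log N$ for $N\ge e$ disposes of $\log\log N$, and the term $N\ge\exp\!\big(-W_{-1}(-\tfrac23\log R)\big)$ (used when $R\le e^{3/(2e)}$; in the complementary range $N\ge e$ already suffices) gives $\frac{\log N}{N}\le\frac23\log R$, which accounts for the remaining multiple of $\log N$. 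The same kind of threshold, together with the monotonicity of $v\mapsto ve^{-v}$ and with $3e\sqrt{R^2-1}>2\log R$ for $R>1$, is what secures $Ne\tau\ge\tau/\sqrt{R^2-1}$ used in the first paragraph. The only real difficulty in turning this into a complete proof is the constant-chasing: checking that a single pass through \eqref{CL} really reproduces the constant $K_{F,4}(\tau)$, and that among the five candidates $\{N,2N,3N,4N,5N\}$ produced by Lemma \ref{min20} it is enough to control $n=N$ after verifying the monotonicity; no analytic input beyond Theorem \ref{largeZeros}, Lemma \ref{min20} and the counting bound \eqref{CL} is needed.
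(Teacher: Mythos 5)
Your proposal is correct and follows essentially the same route as the paper: the same three-way decomposition (tail via Theorem \ref{largeZeros}, bulk via \eqref{CL} giving $K_{F,4}(\tau)n\log n$, and the single exceptional zero via Lemma \ref{min20} with $M=1$), contraposition for one direction, and the same reduction to $R^n\ge 40n\log n(\tfrac12+K_{F,1}(\tau)+K_{F,4}(\tau))$ verified by splitting $\log R$ into $\tfrac23+\tfrac14+\tfrac1{12}$ against the three lower bounds on $N$. The only divergence is that you place $\rho_1$ among the zeros with $|\Im(\rho)|\le T(n)$ (which forces the side check $Ne\tau\ge\tau/\sqrt{R^2-1}$ that you correctly sketch), whereas the paper simply isolates $\rho_1$ from the decomposition regardless of its height and restricts the other two sums to $\Re(\rho)\le\tau/2$, so no such check is needed.
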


\begin{proof}
The proof consists of finding an upper bound for the term $|\Re(\lambda_F(n, \tau))|$ if the zero $\rho_1$ does not exist and a lower bound for the term $|\Re(\lambda_F(n,\tau))|$ if the zero exists. These results can be derived from the same estimates.

We denote $T(n)\coloneqq ne\tau$. For all zeros $\rho \ne \rho_1$ of the function $F(s)$ we have $|\frac{\rho}{\rho-\tau}|\le 1$ and thus $\Re(\rho) \le \frac{\tau}{2}$. Hence we have
\begin{equation}
\label{formulaException}
		\begin{aligned}
				\Re(\lambda_F(n,\tau)) &=\lim_{t \to \infty}\sum_{\substack{T(n)<|\Im(\rho)|\le t \\ 0 \le \Re(\rho)\le \frac{\tau}{2}}}\Re\left(1-\left(\frac{\rho}{\rho-\tau}	\right)^n \right)\\
				& \quad +\sum_{\substack{|\Im(\rho)|\le T(n) \\ 0 \le \Re(\rho)\le \frac{\tau}{2}}}\Re\left(1-\left(\frac{\rho}{\rho-\tau}\right)^n \right)+\Re\left(1-		\left(\frac{\rho_1}{\rho_1-\tau}\right)^n\right),
		\end{aligned}
\end{equation}
where the last term exists if and only if the zero $\rho_1$ exists. The first term on the right-hand side can be estimated by Theorem \ref{largeZeros}. Thus it is sufficient to estimate the last two terms of the previous equation.

We have
\begin{align*}
\left|\sum_{\substack{|\Im(\rho)|\le T(n) \\ 0 \le \Re(\rho)\le \frac{\tau}{2}}}\Re\left(1-\left(\frac{\rho}{\rho-\tau}\right)^n \right)\right| \le \sum_{\substack{|\Im(\rho)|\le T(n) \\ 0 \le \Re(\rho)\le \frac{\tau}{2}}}2.
\end{align*}
By formula \eqref{CL}, for $n \ge N \ge 3$ the right-hand side is 
\begin{align*}
& < 2\left(\vphantom{\frac{C_{F,3}(T_0)}{T(n)}}A_FT(n)\log{T(n)}+B_FT(n) \right.\\
& \left.\quad+C_{F,1}(T_0)\log{T(n)}+C_{F,2}(T_0)+\frac{C_{F,3}(T_0)}{T(n)}\right) \\
& \le 2n\log{n}\left(\vphantom{\frac{C_{F,3}(T_0)}{e^3\tau}}A_Fe\tau\log{(e^2\tau)}+|B_F|e\tau \right. \\
& \quad  \left.+\frac{C_{F,1}(T_0)\log{(e^2\tau)}+C_{F,2}(T_0)}{3}+\frac{C_{F,3}(T_0)}{9e\tau}\right) \\
& = K_{F,4}(\tau)n\log{n}.
\end{align*}
Using formula \eqref{formulaException} and Theorem \ref{largeZeros}, it follows
\[
|\Re(\lambda_F(n,\tau))|<K_{F,1}(\tau)n\log{n}+K_{F,4}(\tau)n\log{n}
\]
if the function $F(s)$ does not have zeros with $\Re(\rho)>\frac{\tau}{2}$.

We have also almost estimated the right-hand side of formula \eqref{formulaException} with the zero $\rho_1$. Similarly as in the proof of Theorem \ref{smallZeros}, we obtain 
\begin{equation*}
		\Re\left(1-\left(\frac{\rho_1}{\rho_1-\tau}\right)^n\right) \le 1-\frac{1}{20}R^n
\end{equation*}
for some integer $n \in [N,5N]$ for which $N \mid n$. For $n \ge N \ge \frac{\log{20}}{\log{R}}$, this means that
\[
\left|\Re\left(1-\left(\frac{\rho_1}{\rho_1-\tau}\right)^n\right)\right| \ge \frac{1}{20}R^n-1. 
\]
Thus, if a zero $\rho$ exists, then for some $n \in [N,5N]$ it holds that
\begin{equation*}
|\Re(\lambda_F(n,\tau))|> \frac{1}{20}R^n-1-\left(K_{F,1}(\tau)+K_{F,4}(\tau)\right)n\log{n}.
\end{equation*}
We want to prove that the previous formula is at least $$(K_{F,1}(\tau)+K_{F,4}(\tau))n\log{n}$$ for all $n \in [N, 5N]$.

It is sufficient to show
\[
R^n\ge 40n\log{n}(\frac{1}{2}+K_{F,1}(\tau)+K_{F,4}(\tau)).
\]
This can be equivalently written as
\[
\begin{aligned}
& \frac{2}{3}\log{R}+\frac{1}{4}\log{R}+\frac{1}{12}\log{R} \\
&\quad\ge \frac{\log{n}}{n}+\frac{\log{\log{n}}}{n}+\frac{\log{\left(40\left(0.5+K_{F,1}(\tau)+K_{F,4}(\tau)\right)\right)}}{n}.
\end{aligned}
\]
This follows similarly from the assumptions for the number $n$ as result \eqref{smallZero} in the proof of Theorem \ref{mainresult1}. Also, the coefficients $2/3$, $1/4$ and $1/12$ are selected because of the similar reasons as in the proof of Theorem \ref{mainresult1}. Indeed, the term $\log{n}$ grows faster than the term $\log{\log{n}}$, this grows faster than a constant term and for all $n\ge e$ we have $3\log{n}\ge 8\log\log{n}$.

Thus we have proved the claim.
\end{proof}

\section{Example: Dirichlet \texorpdfstring{$L$}{}-functions}
\label{subsDiri}

Let $F(s)$ be a Dirichlet $L$-function associated with a primitive non-principal character modulo $q$ and $\tau \ge 1$. We know that the function $F$ does not have zeros with real parts greater than one and $F(1)\ne 0$. Thus it satisfies conditions \ref{locationConditionMore} and \ref{locationCondition} for $\tau$. By T. S. Trudgian \cite[Theorem 1]{trudgian}, for $T\ge 1$ we have 
\begin{equation}
\label{diricformula}
\begin{aligned}
		&\left|\mathcal{N}_{F}(T) -\frac{T}{\pi}\log{T}-\frac{T}{\pi}\log{\frac{q}{2\pi e}}\right|<0.317\log{T}+0.317\log{q}+6.401.
\end{aligned}
\end{equation} 
Thus the function $F(s)$ also satisfies condition \ref{numberCondition}. Furthermore, the function $F(s)$ is in the Selberg class and thus by \cite[Lemma 2.1.2]{droll} satisfies also condition \ref{computationCondition}. 

We can set
$$
		\begin{aligned}
				& A_F=\frac{1}{\pi}, \quad B_F=\frac{1}{\pi}\log{\frac{q}{2\pi e}}, \quad T_0=1, \\
				& C_{F,1}(T_0)=0.317, \quad  c_{F,1}(T_0)=0.634, \\
				& C_{F,2}(T_0)=0.317\log{q}+6.401, \\
				& c_{F,2}(T_0)=0.317\log{2}+0.634\log{q}+12.802, \\
				& \text{and} \quad C_{F,3}(T_0)=c_{F,3}(T_0)=0. \\
		\end{aligned}
$$ 
This leads to the following two corollaries which describe Theorems \ref{mainresult1} and \ref{resultExp1} for the function $F(s)$. We do not write a full corollary for the result obtained from Theorem \ref{mainresult2} since the formulas are quite long and we give little bit nicer numerical results in Section \ref{subsubsDiriSecond}.

The first one describes the relationship between Theorem \ref{mainresult1} and the Dirichlet $L$-functions associated with a primitive non-principal character modulo $q$. We cannot remove any terms form the sets which give lower bound for the number $N$ since the term $K_{F,2}(\tau)$ depends on the number $q$ on the other terms do not and the first term may be larger than the other ones.
\begin{corollary}
\label{corolDiri1}
Let $R>1$ and $\tau\ge 1$ be real numbers. If we consider a Dirichlet $L$-function associated with a primitive non-principal character modulo $q$, then in Theorem \ref{mainresult1} we have
	\begin{equation*}
				\begin{aligned}
						N &= \ceil[\Bigg]{\max\left\{\frac{\tau}{\sqrt{R^2-1}}, \right. \\
						& \quad \left. \exp\left(-W_{-1}\left(-\frac{4}{3(5\tau^2/\pi+4)}\log{R}\right)\right), \frac{12\log{(20K_{F,2}(\tau))}}			{\log{R}}\right\}}
				\end{aligned}
		\end{equation*}  
		if $R \le e^{\frac{3(5\tau^2/\pi+4)}{4e}}$ and 
		\begin{equation*}
				\begin{aligned}
						N &= \ceil[\Bigg]{\max\left\{e,\frac{\tau}{\sqrt{R^2-1}}, \frac{12\log{(20K_{F,2}(\tau))}}{\log{R}}\right\}}.
				\end{aligned}
		\end{equation*}
		
		Here
		\begin{equation*}
    	\begin{aligned}
		K_{F,2}(\tau)&= 5(1/\pi+M_F)\left(\frac{5}{2}+\log{(5e\tau)}+\log{\left(1/\pi+\frac{M_F}{1.732}\right)}\right)\cdot \\
		& \quad \cdot \left(2\tau e^{\frac{5\tau^2M_F+2}{2}}\left(\frac{1}{\pi}+\frac{\left|\log{\frac{q}{2\pi e}}\right|}{\pi\log{(3e\tau)}}+\frac{0.317}{3e\tau}\right.\right. \\
		& \quad \left.\left.+\frac{0.317\log{q}+6.401}{3e\tau\log{(3e\tau)}} \right)+K_{F,1}(\tau)\right),
	\end{aligned}
    \end{equation*}
		\[
		M_F= \frac{1}{\pi}\log{\frac{q}{2\pi e}}+\frac{0.317}{e}+\frac{0.317\log{q}+6.401}{3}
        \]
        and
        \begin{align*}
				K_{F,1}(\tau)&= \frac{2\tau}{3\pi}\left(e+\frac{1}{e}\right)\left(1+\left|\log{\frac{4q\tau}{\pi}}\right|\right) \\
				& \quad +\frac{4}{27}\left(1+\frac{1}{e^2}\right)\left(\frac{0.634}{3}\log{2}+0.317\log{(2e^4q^2\tau^2)}+12.802\right).
		\end{align*}
\end{corollary}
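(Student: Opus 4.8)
The plan is to obtain Corollary~\ref{corolDiri1} as a direct specialization of Theorem~\ref{mainresult1}: verify that a Dirichlet $L$-function $F$ attached to a primitive non-principal character modulo $q$ meets conditions \ref{locationConditionMore}--\ref{computationCondition} with the parameter values listed just before the corollary, and then substitute those values into the definition of $N$ and into \eqref{defK2} together with the definitions of $K_{F,1}(\tau)$ and $M_F$, simplifying along the way.

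For the verification step I would note (as already recorded in the text preceding the corollary) that $F$ has no zeros with $\Re(s)>1$ and $F(1)\ne0$, which gives conditions \ref{locationConditionMore} and \ref{locationCondition} for $\tau\ge1$; that Trudgian's estimate \eqref{diricformula} is precisely \eqref{CL} with $A_F=\tfrac1\pi$, $B_F=\tfrac1\pi\log\tfrac{q}{2\pi e}$, $T_0=1$, $C_{F,1}(T_0)=0.317$, $C_{F,2}(T_0)=0.317\log q+6.401$ and $C_{F,3}(T_0)=0$, while the companion bound \eqref{zeros24} follows from the general implication \eqref{cFjBounds} and may be used with $c_{F,1}(T_0)=0.634$, $c_{F,2}(T_0)=0.317\log2+0.634\log q+12.802$, $c_{F,3}(T_0)=0$; and that condition \ref{computationCondition} holds since $F$ lies in the Selberg class, by \cite[Lemma~2.1.2]{droll}. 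Since $q\ge3$ for a primitive non-principal character, a short computation also shows $M_F>0$ (it is increasing in $q$ and positive at $q=3$), which I will need below.

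Then I would substitute $A_F=1/\pi$ throughout Theorem~\ref{mainresult1}: this replaces the threshold $e^{3(5\tau^2A_F+4)/(4e)}$ by $e^{3(5\tau^2/\pi+4)/(4e)}$ and the argument of $W_{-1}$ by $-\tfrac{4}{3(5\tau^2/\pi+4)}\log R$. The entries $T_0=1$ and $e^{(1-15M_F)/(15A_F)}$ can be dropped from the maximum defining $N$: since $M_F>0$ we have $\tfrac{1-15M_F}{15A_F}<\tfrac{\pi}{15}<1$, hence $e^{(1-15M_F)/(15A_F)}<e$, and in the case $R\le e^{3(5\tau^2/\pi+4)/(4e)}$ the entry $\exp(-W_{-1}(\cdot))\ge e$ (because $-W_{-1}\ge1$) dominates both suppressed entries, while in the opposite case the entry $e$ is already present. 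Finally, inserting the parameter values into $K_{F,1}(\tau)$, $M_F$ and \eqref{defK2} and regrouping logarithms — using $\tfrac1\pi\log(8e\tau)+\tfrac1\pi\log\tfrac{q}{2\pi e}=\tfrac1\pi\log\tfrac{4q\tau}{\pi}$ and $0.634\log(e^2\tau)+0.634\log q+0.317\log2=0.317\log(2e^4q^2\tau^2)$, together with $M_F>0$ and $\tfrac1\pi+\tfrac{M_F}{1.732}>1$ to strip the corresponding absolute values — produces exactly the displayed expressions.

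I do not expect a genuine obstacle: the argument is a verification-and-substitution exercise, and the only points requiring care are checking that the two suppressed entries of $N$ are truly dominated (which rests on $M_F>0$, i.e. on $q\ge3$) and performing the logarithmic bookkeeping so that the numerical constants match the corollary verbatim.
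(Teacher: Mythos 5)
Your proposal is correct and matches the paper's (implicit) approach exactly: the paper states the parameter values drawn from Trudgian's estimate and the Selberg-class membership, and leaves the substitution into Theorem~\ref{mainresult1} and formula~\eqref{defK2} to the reader. You carry out the same verification-and-substitution, and you correctly supply the two points the paper leaves silent — namely that $M_F>0$ for $q\ge3$ (so that $e^{(1-15M_F)/(15A_F)}<e$ and the two entries $T_0=1$ and $e^{(1-15M_F)/(15A_F)}$ are dominated in the max, and also that $1/\pi+M_F/1.732>1$, which justifies dropping the absolute value around the logarithm in $K_{F,2}$) — as well as the logarithmic consolidations $\tfrac1\pi\log(8e\tau)+\tfrac1\pi\log\tfrac{q}{2\pi e}=\tfrac1\pi\log\tfrac{4q\tau}{\pi}$ and $0.634\log(e^2\tau)+0.317\log 2+0.634\log q=0.317\log(2e^4q^2\tau^2)$ that produce the displayed form of $K_{F,1}(\tau)$.
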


Now, we describe what the \ref{resultExp1} says for the function $F(s)$. Please notice that in the next theorem we cannot remove any element from the sets since they depend on the different sets of variables.
\begin{corollary}
\label{coroDiri2}
Let us consider a Dirichlet $L$-function associated with a primitive non-principal character modulo $q$ and let $R>1$ and $\tau\ge 1$ be real numbers. Then the number $N$ defined in Theorem \ref{resultExp1} can be written as
\[
\begin{aligned}
N &= \ceil[\Bigg]{\max\left\{\exp\left(-W_{-1}\left(-\frac{2\log{R}}{3}\right)\right), \right.\\
&\quad \left.\frac{12\log{\left(40\left(0.5+K_{F,1}(\tau)+K_{F,4}(\tau)\right)\right)}}{\log{R}}\right\}}
\end{aligned}
\]
if $R \le e^{\frac{3}{2e}}$, and
\[
N = \ceil[\Bigg]{\max\left\{e, \frac{12\log{\left(40\left(0.5+K_{F,1}(\tau)+K_{F,4}(\tau)\right)\right)}}{\log{R}}\right\}}
\]
otherwise.

Here $K_{F,1}(\tau)$ is defined as on Corollary \ref{corolDiri1} and
\begin{equation*}
\begin{aligned}
K_{F,4}(\tau)&= 2\left(\frac{e\tau}{\pi}\left(\log{(e^2\tau)}+\left|\log{\frac{q}{2\pi e}}\right|\right) +\frac{0.317\log{(e^2q\tau)}+6.401}{3}\right).
\end{aligned}
\end{equation*}
\end{corollary}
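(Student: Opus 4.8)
The plan is to derive Corollary \ref{coroDiri2} as a pure specialization of Theorem \ref{resultExp1}: the statement only asserts what the quantities $N$, $K_{F,1}(\tau)$, $K_{F,4}(\tau)$ become for a Dirichlet $L$-function attached to a primitive non-principal character modulo $q$ with $\tau\ge 1$, so the work is substitution and cosmetic simplification. First I would recall that the discussion preceding the corollary already checks that such an $F$ satisfies conditions \ref{locationConditionMore}--\ref{computationCondition} with $A_F=1/\pi$, $B_F=\frac{1}{\pi}\log\frac{q}{2\pi e}$, $T_0=1$, $C_{F,1}(T_0)=0.317$, $C_{F,2}(T_0)=0.317\log q+6.401$, $C_{F,3}(T_0)=0$, the bounds \eqref{CL} and \eqref{zeros24} with $T_0=1$ being supplied by Trudgian's estimate \eqref{diricformula}, and the companion constants $c_{F,1}(T_0)=0.634$, $c_{F,2}(T_0)=0.317\log 2+0.634\log q+12.802$, $c_{F,3}(T_0)=0$ being legitimate in view of \eqref{cFjBounds}. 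Hence Theorem \ref{resultExp1} applies with exactly these parameters and $\tau\ge 1>\frac1e$.

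Next I would evaluate $K_{F,4}(\tau)$ from its definition \eqref{defK4}. With $C_{F,3}(T_0)=0$ the last term drops; the logarithmic part collapses via $C_{F,1}(T_0)\log(e^2\tau)+C_{F,2}(T_0)=0.317\log(e^2\tau)+0.317\log q+6.401=0.317\log(e^2q\tau)+6.401$, while $A_Fe\tau\log(e^2\tau)+|B_F|e\tau=\frac{e\tau}{\pi}\bigl(\log(e^2\tau)+|\log\frac{q}{2\pi e}|\bigr)$, which is exactly the displayed formula for $K_{F,4}(\tau)$. For $K_{F,1}(\tau)$ there is nothing new to do: the identical substitution of these constants into the definition in Theorem \ref{largeZeros} has already been carried out in Corollary \ref{corolDiri1}, so I would simply quote that expression.

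Finally I would simplify the bound for $N$. The only point that is not immediate is that the entry $\frac{1}{e\tau}T_0=\frac{1}{e\tau}$ may be removed from the maximum in both cases: since $\tau\ge 1$ we have $\frac{1}{e\tau}\le\frac1e<e$, which handles the regime $R>e^{3/(2e)}$ (where $e$ already appears in the maximum), and in the regime $1<R\le e^{3/(2e)}$ the argument $-\frac{2\log R}{3}$ lies in $[-e^{-1},0)$, so $W_{-1}\!\left(-\frac{2\log R}{3}\right)\le -1$ and hence $\exp\!\left(-W_{-1}\!\left(-\frac{2\log R}{3}\right)\right)\ge e>\frac{1}{e\tau}$; either way $\frac{1}{e\tau}T_0$ is dominated. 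The threshold $R\le e^{3/(2e)}$ is carried over unchanged because, unlike in Theorem \ref{mainresult1}, it does not involve $A_F$. This yields precisely the two displayed formulas for $N$. The whole argument is routine arithmetic; there is no genuine obstacle, the only thing worth a second look being the bookkeeping that $T_0=1$ is admissible in \eqref{CL}--\eqref{zeros24} for this $F$ and that the chosen $c_{F,j}(T_0)$ are valid, both of which are furnished by \eqref{diricformula} and \eqref{cFjBounds}.
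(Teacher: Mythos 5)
Your proposal is correct and matches what the paper does: the corollary is a direct specialization of Theorem \ref{resultExp1} using the constants $A_F=1/\pi$, $B_F=\frac{1}{\pi}\log\frac{q}{2\pi e}$, $T_0=1$, $C_{F,1}=0.317$, $C_{F,2}=0.317\log q+6.401$, $C_{F,3}=0$ supplied by Trudgian's bound, and your justification for dropping $\frac{1}{e\tau}T_0=\frac{1}{e\tau}$ from both maxima (dominated by $e$ in one case and by $\exp(-W_{-1}(\cdot))\ge e$ in the other) is exactly the bookkeeping needed. The paper offers no further argument, so there is nothing to add.
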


In the next two sections we consider numerical examples of Theorems \ref{mainresult1}, \ref{mainresult2} and \ref{resultExp1} and Corollaries \ref{corolDiri1} and \ref{coroDiri2} for Dirichlet $L$-functions associated with a primitive non-principal character modulo $q=100$.  

We consider different regions $\left|\frac{\rho}{\rho-\tau}\right|\ge R$ determined by the numbers $R$ and $\tau$. Some regions $\left|\frac{\rho}{\rho-\tau}\right|\ge R$ for $\tau=1$ and different values for the number $R$ are described in Figure \ref{pic1}. The regions are symmetric with respect to the line $\Re(s)=\frac{1}{2}$ since the zeros of the function $F(s)$ lie symmetrically with respect to this line. Figure \ref{pic1} also contains the results described in formulas \eqref{mccurleyRegion} and \eqref{kadiriRegion} and proved by McCurley \cite[Theorem 1]{mccurley} and Kadiri \cite[Theorem 1.1.1]{kadiri} respectively. As it can be seen in Figure \ref{pic1}, the results proved in Theorems \ref{mainresult1} and \ref{mainresult2} allow us to consider existence of the zeros which have real parts close to the line $\Re(s)=\frac{1}{2}$ while Kadiri's and McCurley's results consider zeros which real parts are close to the lines $\Re(s)=1$ and $\Re(s)=0$. On the other hand, Kadiri's and McCurley's results do not have upper bound for the absolute values of imaginary parts of the zeros whereas Theorems \ref{mainresult1}, \ref{mainresult2} and \ref{resultExp1} consider only zeros up to some height. Kadiri's and McCurley's results also provide clear zero-free regions while the results proved in this article only provide some conditions to hold. Also, with the exception of Theorem \ref{resultExp1}, we do \textbf{not} give any if and only if statements.

\begin{figure}
\begin{tikzpicture}
		\begin{axis}[width=0.75\textwidth, legend style={font=\small, at={(1.05,0.5)},anchor=west},xlabel={$\Re(\rho)$}, ylabel={$\Im(\rho)$}]
						\addplot+[name path=F7,mark=none, color=black, domain=0.50025:1, forget plot] {sqrt((1.0001*1.0001*(x-1)*(x-1)-x*x)/											(1-1.0001*1.0001))};
						\addplot+[name path=G7, mark=none, color=black, domain=0.50025:1, forget plot] {-sqrt((1.0001*1.0001*(x-1)*(x-1)-x*x)/											(1-1.0001*1.0001))};
						\addplot+[name path=F8, mark=none, color=black, domain=0:0.49975, forget plot] {sqrt((1.0001*1.0001*(-x)*(-x)-(1-x)*(1-x))/										(1-1.0001*1.0001))};
						\addplot+[name path=G8, mark=none, color=black, domain=0:0.49975, forget plot] {-sqrt((1.0001*1.0001*(-x)*(-x)-(1-x)*(1-x))/										(1-1.0001*1.0001))};
						\addplot+[pattern=dots, pattern color=gray, forget plot]fill between[of=F7 and G7, soft clip={domain=0.50025:1}];
						\addplot[pattern=dots, pattern color=gray]fill between[of=F8 and G8, soft clip={domain=0:0.49975}];
						\addlegendentry{$R=1.0001$}

						\addplot+[name path=F5, mark=none, color=black, domain=0.50025:1, forget plot] {sqrt((1.001*1.001*(x-1)*(x-1)-x*x)/											(1-1.001*1.001))};
						\addplot+[name path=G5, mark=none, color=black, domain=0.50025:1, forget plot] {-sqrt((1.001*1.001*(x-1)*(x-1)-x*x)/											(1-1.001*1.001))};
						\addplot+[name path=F6, mark=none, color=black, domain=0:0.49975, forget plot] {sqrt((1.001*1.001*(-x)*(-x)-(1-x)*(1-x))/										(1-1.001*1.001))};
						\addplot+[name path=G6, mark=none, color=black, domain=0:0.49975, forget plot] {-sqrt((1.001*1.001*(-x)*(-x)-(1-x)*(1-x))/										(1-1.001*1.001))};
						\addplot+[pattern=horizontal lines, pattern color=gray, forget plot]fill between[of=F5 and G5, soft clip={domain=0.50025:1}];
						\addplot[pattern=horizontal lines, pattern color=gray]fill between[of=F6 and G6, soft clip={domain=0:0.49975}];
						\addlegendentry{$R=1.001$}

						\addplot+[name path=F3, mark=none, color=black, domain=0.502488:1, thick, forget plot] {sqrt((1.01*1.01*(x-1)*(x-1)-x*x)/(1-1.01*1.01))};
						\addplot+[name path=G3, mark=none, color=black, domain=0.502488:1, thick, forget plot] {-sqrt((1.01*1.01*(x-1)*(x-1)-x*x)/										(1-1.01*1.01))};
						\addplot+[name path=F4, mark=none, color=black, domain=0:0.497512, thick, forget plot] {sqrt((1.01*1.01*(-x)*(-x)-(1-x)*(1-x))/										(1-1.01*1.01))};
						\addplot+[name path=G4, mark=none, color=black, domain=0:0.497512, thick, forget plot] {-sqrt((1.01*1.01*(-x)*(-x)-(1-x)*(1-x))/									(1-1.01*1.01))};
						\addplot+[pattern=grid,  pattern color=gray, forget plot]fill between[of=F3 and G3, soft clip={domain=0.502488:1}];
						\addplot[pattern=grid,  pattern color=gray]fill between[of=F4 and G4, soft clip={domain=0:0.497512}];
						\addlegendentry{$R=1.01$}

						\addplot+[name path=F1,mark=none, color=black, domain=0.52381:1, thick, forget plot] {sqrt((1.1*1.1*(x-1)*(x-1)-x*x)/(1-1.1*1.1))};
						\addplot+[name path=G1,mark=none, color=black, domain=0.52381:1, thick, forget plot] {-sqrt((1.1*1.1*(x-1)*(x-1)-x*x)/(1-1.1*1.1))};
						\addplot+[name path=F2, mark=none, color=black, domain=0:0.47619, thick, forget plot] {sqrt((1.1*1.1*(-x)*(-x)-(1-x)*(1-x))/(1-1.1*1.1))};
						\addplot+[name path=G2, mark=none, color=black, domain=0:0.47619, thick, forget plot] {-sqrt((1.1*1.1*(-x)*(-x)-(1-x)*(1-x))/(1-1.1*1.1))};
						\addplot+[pattern=checkerboard, forget plot]fill between[of=F1 and G1, soft clip={domain=0.52381:1}];
						\addplot[pattern=checkerboard]fill between[of=F2 and G2, soft clip={domain=0:0.47619}];
						\addlegendentry{$R=1.1$}

						\addplot+[mark=none, color=violet,  domain=1:80, thick, forget plot] (1-1/(9.645908801*ln(100*x)),x);
						\addplot+[mark=none,color=violet,  domain=-1:1,thick, forget plot] (1-1/(9.645908801*ln(100)),x);
						\addplot+[mark=none,color=violet,  domain=-80:-1, thick, forget plot] (1-1/(9.645908801*ln(100*(-x))),x);
						\addplot+[mark=none,color=violet,  domain=1:80,thick, forget plot] (1/(9.645908801*ln(100*x)),x);
						\addplot+[mark=none,color=violet,  domain=-1:1,thick, forget plot] (1/(9.645908801*ln(100)),x);
						\addplot[mark=none,color=violet,  domain=-80:-1, thick] (1/(9.645908801*ln(100*(-x))),x);
						\addlegendentry{McCurley}

						\addplot+[mark=none, color=green, dashed, domain=-1:1, very thick, forget plot] (1-1/(5.6*ln(100)),x);
						\addplot+[mark=none, color=green, dashed, domain=1:80, very thick, forget plot] (1-1/(5.6*ln(100*x)),x);
						\addplot+[mark=none, color=green, dashed, domain=-80:-1, very thick, forget plot] (1-1/(5.6*ln(100*(-x))),x);
						\addplot[mark=none, color=green, dashed,  domain=-1:1, ultra thick] (1/(5.6*ln(100)),x);
						\addplot[mark=none, color=green, dashed,  domain=1:80, ultra thick] (1/(5.6*ln(100*x)),x);
						\addplot[mark=none, color=green, dashed,  domain=-80:-1, ultra thick] (1/(5.6*ln(100*(-x))),x);
						\addlegendentry{Kadiri}
		\end{axis}
\end{tikzpicture}
\caption{Different regions, $\left|\frac{\rho}{\rho-1}\right|\ge R$, McCurley \eqref{mccurleyRegion} and Kadiri \eqref{kadiriRegion}.}
\label{pic1}
\end{figure}
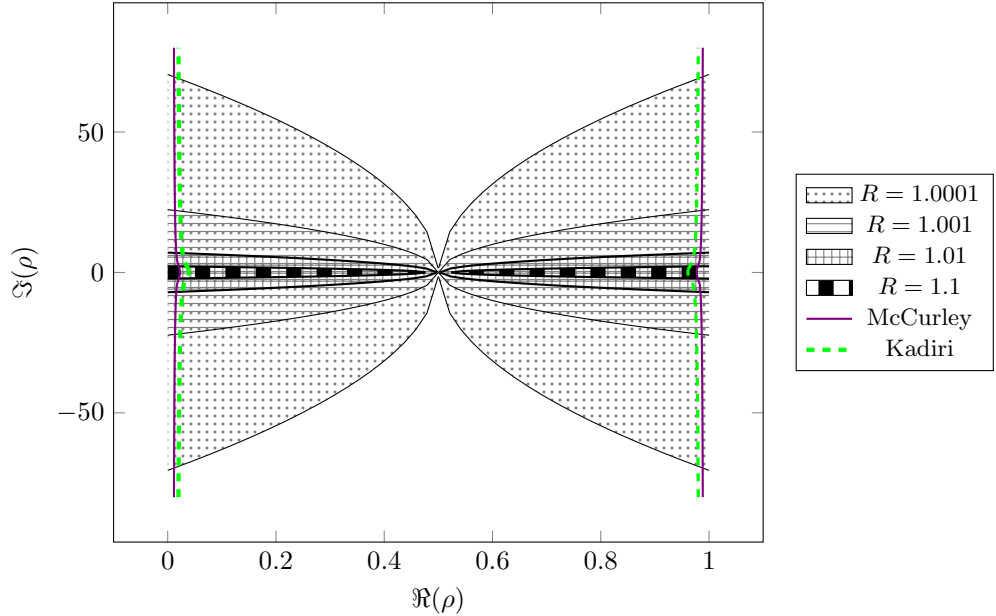

\subsection{Dirichlet \texorpdfstring{$L$}{}-functions and Theorems \ref{mainresult1} and \ref{resultExp1}}
\label{subsubsDiriFirst}
In this section we consider what Theorems \ref{mainresult1} and \ref{resultExp1} and Corollaries \ref{corolDiri1} and \ref{coroDiri2} state for Dirichlet $L$-functions associated with a primitive non-principal character modulo $q=100$. Recall that according to Theorem \ref{mainresult1} if certain $\tau$-Li coefficients are non-negative, then all of the zeros lie outside certain region. Furthermore, according to Theorem \ref{resultExp1}, if there exists exactly one zero $\rho$ with $\left|\frac{\rho}{\rho-\tau}\right|\ge R$, then $|\lambda_F(n,\tau)|$ is large enough for a certain integer $n$. The numerical results can be seen in Table \ref{Diri1}. 

\begin{table}
\caption{Different values of the terms defined in Theorem \ref{mainresult1}/Corollary \ref{corolDiri1} and in Theorem \ref{resultExp1}/Corollary \ref{coroDiri2}. Here $F$ is a Dirichlet $L$-function associated with a primitive non-principal character modulo $100$.}
\label{Diri1}
\begin{center}
\begin{tabular}{lllll}
\hline\noalign{\smallskip}
$\tau$ &  $R$ &Theorem \ref{mainresult1}:&Theorem \ref{mainresult1}: & Theorem \ref{resultExp1}: \\
  &   & $N$ & $5N^2\left(A_F\log{N}+M_F\right)$ &  $N$ \\
\midrule
1 & 1.1 & 2228 & \num{142795217} & \num{838} \\
1 &1.01 & \num{21335} & \num{14730585353} & \num{8027}  \\
1 & 1.001 & \num{212394} & \num{1624882482585} & \num{79909} \\
1 & 1.0001 & \num{2122983} & \num{178855533212062} & \num{798729} \\
1.5 &1.1 & \num{3551}  & \num{372085537} & \num{876}  \\
1.5 &1.01 & \num{34009} & \num{38288548586} & \num{8387}  \\
1.5 & 1.001 & \num{338570} & \num{4213969451870} & \num{83491} \\
1.5 & 1.0001 & \num{3384171} & \num{462978327657268}  & \num{834529} \\
\noalign{\smallskip}\hline\noalign{\smallskip}
\end{tabular}
\end{center}
\end{table}

\subsection{Dirichlet \texorpdfstring{$L$}{}-functions and proof of Theorem \ref{mainresult2}}
\label{subsubsDiriSecond}
Next we consider what the proof of Theorem \ref{mainresult2} states for the Dirichlet $L$-function associated with a primitive non-principal character modulo $q=100$. The goal is to find numbers $n$ such that if $\lambda_F(n,\tau)< 0$, then there exists at least one zero $\rho$ with $\left|\frac{\rho}{\rho-\tau}\right|\ge R$. The reason why we consider the proof instead of using the formulas proved in Theorem \ref{mainresult2} is that this way the results are a little bit sharper than in the general case proved in Theorem \ref{mainresult2}. We do not prove full formulas for this case. Instead, we compute numerical examples which can be seen in Table \ref{Diri2}. 

Recall $\tau \ge 1$ and $R>1$. In the proof of Theorem \ref{mainresult2} we proved the claim by showing that if for all the zeros $\rho$ it holds $\left|\frac{\rho}{\rho-\tau}\right|\ge R$, then certain $\tau$-Li coefficients are non-negative. The same method is used here. We also used a variable $T$ such that $T \ge \max\{T_0,ne\tau\}$. Using similar methods as in the proof of Theorem \ref{mainresult2}, we want 
\begin{align}
		\begin{split}
				& \frac{0.432\tau^2}{T}\left(\frac{1}{2\pi}\log{(2T)}+\frac{1}{2\pi}\log{\frac{200}{\pi e}} \right. \\
				& \quad \left.-\frac{0.634}{3T}\log(2^\frac{1}{3}T) -\frac{1}{3T}(0.317\log{2}+0.634\log{100}+12.802)\right ) \\
				& \quad  \coloneqq K_{F,3}(T,\tau) \\
				& \quad >0.
		\end{split}
\end{align}
This holds for $T\ge 6.348$. Further, by the proof of Theorem \ref{mainresult2} and previous estimates, we have 
\begin{equation*}
\begin{aligned}
			\lambda_F(n,\tau ) &>K_{F,3}(T,\tau)n(n-1)+\mathcal{N}_F(T)(1-R^n).
\end{aligned}			
\end{equation*}
We want the right-hand side of the previous inequality to be at least zero for $n \in [n_0,n_1]$, where $n_0$ and $n_1$ are positive integers and $n_1\le \frac{T}{e\tau}$. 

We have computed different values for the terms $n_0$ and $n_1$ using different values for the terms $T$, $\tau$ and $R$. In the computations we have used formula \eqref{diricformula} for the term $\mathcal{N}_F(T)$, recalling that the number of zeros is always an integer and we always have $\Re(\lambda_F(1,\tau))\ge 0$. The results can be seen in Table \ref{Diri2}. 

We can notice some results from the table. For example, if the coefficient $\Re\left(\lambda_F(n,1)\right)$ is negative for some integer $n \in [5,36]\cup[95,183]$, then there exists at least one zero $\rho$ with $\left|\frac{\rho}{\rho-1}\right|\ge 1.0001$.

\begin{table}
\caption{Different values for the terms $n_0$ and $n_1$ such that if for some $n \in [n_0,n_1]$ it holds $\Re\left(\lambda_F(n,\tau)\right)< 0$, then there exists at least one zero $\rho$ with $\left|\frac{\rho}{\rho-\tau}\right|\ge R$. Here $F$ is a Dirichlet $L$-function associated with a primitive non-principal character modulo $100$.}
\label{Diri2}
\begin{center}
\begin{tabular}{lllll}
\hline\noalign{\smallskip}
$T$ & $\tau$ & $R$ & $n_0$& $n_1$   \\
\noalign{\smallskip}\hline\noalign{\smallskip}
100 & 1 & 1.0001 & 5 & 36 \\
100 & 1 & 1.00001 & 1 & 36 \\
500 & 1 & 1.0001 & 95 & 183 \\
500 & 1 & 1.00001 & 11 & 183  \\
500 & 1 & 1.000001 & 1 & 183  \\
\num{10 000} & 1 & 1.000001 & 391  & 3678  \\
\num{10 000} & 1 & 1.0000001 & 40  & 3678  \\
\num{10 000} & 1 & 1.00000001 & 5  & 3678  \\
\num{10 000} & 1 & 1.000000001 & 1  & 3678  \\
100 & 1.5 & 1.001 & 19  & 24  \\
100 & 1.5 & 1.0001 & 3  & 24  \\
100 & 1.5 & 1.00001 &  1 & 24  \\
500 & 1.5 & 1.0001 & 43  & 122  \\
500 & 1.5 & 1.00001 & 6  & 122  \\
500 & 1.5 & 1.000001 & 1  & 122  \\
\num{10 000} & 1.5 & 1.00001 & 1748  & 2452  \\
\num{10 000} & 1.5 & 1.000001 & 175  & 2452  \\
\num{10 000} & 1.5 & 1.0000001 & 19  & 2452  \\
\num{10 000} & 1.5 & 1.00000001 & 3  & 2452  \\
\num{10 000} & 1.5 & 1.000000001 & 1  & 2452  \\
\noalign{\smallskip}\hline
\end{tabular}
\end{center}
\end{table}

\section{Example: \texorpdfstring{$L$}{}-function associated with a holomorphic newform with a level \texorpdfstring{$1$}{} and a weight \texorpdfstring{$12$}{}}
\label{subsL}
In this section we assume that the function $F(s)$ is an $L$-function associated with a holomorphic newform with level $1$ and weight $12$. We also assume that all zeros $\rho$ with $|\Im(\rho)|\le 27$ lie on the critical line. The function $F(s)$ is in the Selberg class and thus satisfies conditions \ref{locationConditionMore}-\ref{computationCondition}. We consider what Theorem \ref{resultExp1} states for the function $F(s)$. Theorem $\ref{resultExp1}$ says that if the function $F(s)$ has exactly one zero $\rho$ with $\left|\frac{\rho}{\rho-\tau}\right|\ge R$, then $|\lambda_F(n,\tau)|$ is large enough for an integer $n$ which lies in a certain interval. 

By G. Fran{\c c}a and A. LeClair \cite[Table IX]{franca} , $\mathcal{N}_F(27)=14$. Thus and by \cite[Table 1]{palojarvi} we have
\begin{equation*}
		\begin{aligned}
				& A_F=\frac{1}{\pi}, \quad B_F=-\frac{1}{\pi}(1+\log{(4\pi^2)}), \quad T_0=27, \\
				& C_{F,1}(T_0)=586, \quad C_{F,2}(T_0)=3904, \quad C_{F,3}(T_0)=\num{23274}, \\
				& c_{F,1}(T_0)=864, \quad c_{F,2}(T_0)=3622 \quad \text{and} \quad c_{F,3}(T_0)=\num{21012}.
		\end{aligned}
\end{equation*}
Using these constants we can formulate the following corollary which describes Theorem \ref{resultExp1} for the function $F(s)$:
\newpage
\begin{corollary}
\label{corolHolo}
Let $F$ be a $L$-function associated with a holomorphic newform with level $1$ and weight $12$ and let $R>1$ and $\tau \ge 1$ be real numbers. Then the number $N$ defined in Theorem \ref{resultExp1} can be written as
\[
\begin{aligned}
N &= \ceil[\Bigg]{\max\left\{\exp\left(-W_{-1}\left(-\frac{2\log{R}}{3}\right)\right), \frac{12\log{\left(40\left(0.5+K_{F,1}(\tau)+K_{F,4}(\tau)\right)\right)}}{\log{R}}\right\}}
\end{aligned}
\]
if $R \le e^{\frac{3}{2e}}$, and
\[
N = \ceil[\Bigg]{\max\left\{e, \frac{27}{e\tau}, \frac{12\log{\left(40\left(0.5+K_{F,1}(\tau)+K_{F,4}(\tau)\right)\right)}}{\log{R}}\right\}}
\]
otherwise.

Here 
\begin{align*}
				K_{F,1}(\tau)&= \frac{2\tau}{3\pi}\left(e+\frac{1}{e}\right)\left(1+\left|\log{\frac{2\tau}{\pi^2}}\right|\right) \\
				& \quad +\frac{4}{27}\left(1+\frac{1}{e^2}\right)\left(288\log{(2e^6\tau^3)}+3622+\frac{\num{21012}}{7e\tau}\right)
\end{align*}
and
\begin{equation*}
\begin{aligned}
K_{F,4}(\tau)&= 2\left(\vphantom{\frac{C_{F,3}(T_0)}{e^3\tau}} \frac{1}{\pi}e\tau\log{(4e^3\pi^2\tau)}+\frac{586\log{(e^2\tau)}+3904}{3}+\frac{2586}{e\tau}\right).
\end{aligned}
\end{equation*}
\end{corollary}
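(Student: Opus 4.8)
The plan is to obtain Corollary \ref{corolHolo} as a direct specialization of Theorem \ref{resultExp1}: the only real work is substituting the constants listed just before the statement and simplifying the logarithms that appear in $K_{F,1}(\tau)$, $K_{F,4}(\tau)$ and in the formula for $N$. First I would record that, being a member of the Selberg class, $F$ satisfies conditions \ref{locationConditionMore}--\ref{computationCondition} for every $\tau\ge1$, and that the values $A_F=1/\pi$, $B_F=-\frac1\pi(1+\log(4\pi^2))$, $T_0=27$, together with the $C_{F,j}(T_0)$ and $c_{F,j}(T_0)$ displayed above, are admissible in \eqref{CL} and \eqref{zeros24} by \cite[Table 1]{palojarvi}, once one uses $\mathcal{N}_F(27)=14$ from \cite[Table IX]{franca}. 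Thus Theorem \ref{resultExp1} applies verbatim with these parameters, and the corollary will follow once the resulting quantities are written out explicitly.

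Next I would evaluate $K_{F,1}(\tau)$ and $K_{F,4}(\tau)$. For $K_{F,1}(\tau)$ (as defined in Theorem \ref{largeZeros}) the substitution gives $A_F\log(8e\tau)+B_F=\frac1\pi\bigl(\log(8e\tau)-1-\log(4\pi^2)\bigr)=\frac1\pi\log\frac{2\tau}{\pi^2}$, so the first bracket becomes $\frac1\pi\bigl(1+\bigl|\log\frac{2\tau}{\pi^2}\bigr|\bigr)$; with $c_{F,1}(T_0)=864$ one has $\frac{c_{F,1}(T_0)}{3}\log2+c_{F,1}(T_0)\log(e^2\tau)=288\log(2e^6\tau^3)$, while $c_{F,2}(T_0)=3622$ and the $c_{F,3}(T_0)$ term reduce to the displayed expression. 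For $K_{F,4}(\tau)$ (as in \eqref{defK4}) one uses $|B_F|=\frac1\pi(1+\log(4\pi^2))$ to get $A_Fe\tau\log(e^2\tau)+|B_F|e\tau=\frac{e\tau}{\pi}\log(4e^3\pi^2\tau)$, and $\frac{C_{F,1}(T_0)\log(e^2\tau)+C_{F,2}(T_0)}{3}+\frac{C_{F,3}(T_0)}{9e\tau}=\frac{586\log(e^2\tau)+3904}{3}+\frac{2586}{e\tau}$. Feeding these and $T_0=27$ into the formula for $N$ in Theorem \ref{resultExp1} yields the ``otherwise'' case of the corollary verbatim, since there the term $T_0/(e\tau)$ is precisely $27/(e\tau)$.

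The one genuine, though short, step is that in the case $R\le e^{3/(2e)}$ the term $T_0/(e\tau)=27/(e\tau)$ may be dropped from the maximum appearing in Theorem \ref{resultExp1}. For this I would note that $\exp\bigl(-W_{-1}(-\tfrac{2}{3}\log R)\bigr)\ge e$ always, since $W_{-1}\le-1$, and that $\log R\le\frac{3}{2e}$ forces $\frac{12\log\bigl(40(0.5+K_{F,1}(\tau)+K_{F,4}(\tau))\bigr)}{\log R}\ge 8e\log\bigl(40(0.5+K_{F,1}(\tau)+K_{F,4}(\tau))\bigr)$. Since every summand inside the bracket defining $K_{F,4}(\tau)$ is nonnegative for $\tau\ge1$, we have $K_{F,4}(\tau)\ge\frac23\bigl(586\log(e^2\tau)+3904\bigr)\ge\frac23(1172+3904)=3384$, so the argument of that logarithm exceeds $40\cdot3384=135360$, whence the third term is at least $8e\log(135360)>250>27/e\ge 27/(e\tau)$. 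Hence the maximum of the three quantities in Theorem \ref{resultExp1} coincides with the maximum of the two quantities displayed in the corollary, which finishes the first case. I expect the main obstacle to be nothing conceptual, only the careful bookkeeping of these logarithmic identities; every other ingredient is already contained in Theorem \ref{resultExp1} and Theorem \ref{largeZeros}.
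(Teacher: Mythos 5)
Your approach is correct and matches the paper's intent: the corollary is an immediate specialization of Theorem \ref{resultExp1} with the listed constants (the paper states it without a separate proof), and the one genuinely extra step---showing that the $T_0/(e\tau)=27/(e\tau)$ entry of the maximum can be dropped when $R\le e^{3/(2e)}$---is both necessary and handled correctly: $W_{-1}\le -1$ gives $\exp(-W_{-1}(\cdot))\ge e$, and since for $\tau\ge1$ one has $K_{F,4}(\tau)\ge\frac{2}{3}(586\cdot 2+3904)=3384$, the logarithmic entry is at least $8e\log(40\cdot 3384)>250>27/e\ge 27/(e\tau)$. The simplifications $A_F\log(8e\tau)+B_F=\frac{1}{\pi}\log\frac{2\tau}{\pi^2}$, $\frac{864}{3}\log 2+864\log(e^2\tau)=288\log(2e^6\tau^3)$, $\frac{e\tau}{\pi}\log(e^2\tau)+|B_F|e\tau=\frac{e\tau}{\pi}\log(4e^3\pi^2\tau)$ and $\frac{23274}{9e\tau}=\frac{2586}{e\tau}$ all check out.

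You should not, however, accept on faith that ``the $c_{F,3}(T_0)$ term reduces to the displayed expression.'' The defining formula for $K_{F,1}(\tau)$ in Theorem \ref{largeZeros} contains $\frac{2c_{F,3}(T_0)}{7e\tau}$, which with $c_{F,3}(T_0)=21012$ evaluates to $\frac{42024}{7e\tau}$, not the $\frac{21012}{7e\tau}$ appearing in the corollary's statement. A careful substitution should either produce the extra factor of $2$ (and hence a marginally larger $N$) or explicitly flag the discrepancy with the printed statement; as written, your proposal quietly inherits a factor-of-two slip instead of checking it.
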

Please notice that the term $N$ in the previous corollary cannot be simplified since the elements in the sets depend on different sets of variables.

We also compute the values for the number $N$ described in Corollary \ref{corolHolo}. The results can be seen in Table \ref{holomorphicMresults}.

\begin{table}
\caption{Different values for the term $N$ which is defined in Theorem \ref{resultExp1}/Corollary \ref{corolHolo}. Here $F$ is a $L$-function associated with a holomorphic newform with a level $1$ and a weight $12$.}
\label{holomorphicMresults}
\begin{center}
\begin{tabular}{lll}
\hline\noalign{\smallskip}
$\tau$ &  $R$ & Theorem \ref{resultExp1}/Corollary \ref{corolHolo}: $N$ \\
\noalign{\smallskip}\hline\noalign{\smallskip}
1 & 1.0001 & \num{1498217} \\
1 & 1.00001 & \num{14981490} \\
1 & $1+10^{-10}$ & \num{1498141425042} \\
1.5 & 1.0001 & \num{1488111} \\
1.5 & 1.00001 & \num{14880440} \\
1.5 &  $1+10^{-10}$ & \num{1488036546102} \\
\noalign{\smallskip}\hline
\end{tabular}
\end{center}
\end{table}

\subsection*{Acknowledgements}
I would like to thank professor Lejla Smajlovi\'{c} for helpful comments for the manuscript.

\nocite{*}
\bibliographystyle{plain}
\bibliography{article_tau_li}{}

\begin{thebibliography}{10}

\bibitem{bombieri}
E.~Bombieri and J.~C. Lagarias.
\newblock {Complements to {L}{i's} criterion for the {R}iemann hypothesis}.
\newblock {\em J.~Number Theory}, 77(2):272--287, 1999.

\bibitem{brown}
F.~C. Brown.
\newblock {Li's criterion and zero-free regions of {L}-functions}.
\newblock {\em J.~Number Theory}, 111(1):1--32, 2005.

\bibitem{bucur}
A.~Bucur, A.-M. Ernvall-Hyt{\"o}nen, A.~Od\v{z}ak, and L.~Smajlovi\'{c}.
\newblock {On a {L}i-type criterion for zero-free regions of certain
  {D}irichlet series with real coefficients}.
\newblock {\em LMS J. Comput. Math.}, 19(1):259--280, 2016.

\bibitem{das}
S.~Das.
\newblock {A brief note on estimates of binomial coefficients}, 2016.
\newblock \url{http://page.mi.fu-berlin.de/shagnik/notes/binomials.pdf}.
  Accessed: 17.4.2020.

\bibitem{droll}
A.~D. Droll.
\newblock {\em {V}ariations of {L}i{'}s criterion for an extension of the
  {S}elberg class}.
\newblock {PhD} thesis, Queens University, Ontario, Canada, 2012.

\bibitem{franca}
G.~Fran{\c c}a and A.~LeClair.
\newblock {Transcendental equations satisfied by the individual zeros of
  {R}iemann $\zeta$, {D}irichlet and modular {$L$}-functions}.
\newblock {\em Commun. Number Theory Phys.}, 9(1):1--50, 2015.

\bibitem{freitas}
P.~Freitas.
\newblock {A {L}i-type criterion for zero-free half planes of {R}iemann's zeta
  function}.
\newblock {\em J. London Math. Soc.}, 73(2):399--414, 2006.

\bibitem{kadiri}
H.~Kadiri.
\newblock {Explicit zero-free regions for {D}irichlet {$L$}-functions}.
\newblock {\em Mathematika}, 64(2):445--474, 2018.

\bibitem{lagarias}
J.~C. Lagarias.
\newblock {Li coefficients for automorphic {$L$}-fuctions}.
\newblock {\em Ann. Inst. Fourier}, 57(5):1689--1740, 2007.

\bibitem{li}
X.-J. Li.
\newblock {The positivity of a sequence of numbers and the {R}iemann
  hypothesis}.
\newblock {\em J.~Number Theory}, 65(2):325--333, 1997.

\bibitem{mazhouda}
K.~Mazhouda.
\newblock {On the Li coefficients for the Dirichlet $L$-function}.
\newblock {\em Monatshefte f{\"u}r Mathematik}, 179(3-4):405--423, 2013.

\bibitem{omar}
K.~Mazhouda and S.~Omar.
\newblock {The {L}i criterion and the {R}iemann hypothesis for the {S}elberg
  class {II}}.
\newblock {\em J.~Number Theory}, 130(4):1098--1108, 2010.

\bibitem{mccurley}
K.~S. McCurley.
\newblock {Explicit zero-free regions for {D}irichlet {$L$}-functions}.
\newblock {\em J.~Number Theory}, 19(1):7--32, 1984.

\bibitem{montgomery}
H.~L. Montgomery.
\newblock {\em Ten Lectures on the Interface between Analytic Number Theory and
  Harmonic Analysis}.
\newblock Cbms Regional Conference Series in Mathematics (Book 84). American
  Mathematical Society, 1994.

\bibitem{odzak}
A.~Od\v{z}ak and L.~Smajlovi\'{c}.
\newblock {{O}n asymptotic behavior of generalized {L}i coefficients in the
  {S}elberg class}.
\newblock {\em J.~Number Theory}, 131(1):519--535, 2011.

\bibitem{palojarvi}
N.~Paloj{\"a}rvi.
\newblock {On the explicit upper and lower bounds for the number of zeros of
  the Selberg class}.
\newblock {\em J.~Number Theory}, 194:218--250, 2019.

\bibitem{smajlovic}
L.~Smajlovi\'{c}.
\newblock {On {L}i's criterion for the {R}iemann hypothesis for the {S}elberg
  class}.
\newblock {\em J.~Number Theory}, 130(4):828--851, 2010.

\bibitem{steuding}
J.~Steuding.
\newblock On the value-distribution of ${L}$-functions.
\newblock {\em Fiz. Mat. Fak. Moksl. Semin. Darb.}, 6:87--119, 2003.

\bibitem{trudgian}
T.~S. Trudgian.
\newblock {An improved upper bound for the error in the zero-counting formulae
  for {D}irichlet {$L$}-functions and {D}edekind zeta-function}.
\newblock {\em Math. Comp.}, 84(293):1439--1450, 2015.

\bibitem{voros}
A.~Voros.
\newblock {Sharpenings of {L}i's criterion for the {R}iemann {H}ypothesis}.
\newblock {\em Math. Phys. Anal. Geom.}, 9(1):53--63, 2006.

\end{thebibliography}

\end{document}